\newtheorem{Theorem}{Theorem}[section]
\newtheorem{Proposition}[Theorem]{Proposition}
\newtheorem{Corollary}[Theorem]{Corollary}
\newenvironment{manualCorollary}[1]{%
  \manualCorollaryinner
}{\endmanualCorollaryinner}
\newtheorem{Lemma}[Theorem]{Lemma}
\theoremstyle{remark}
\newtheorem{Remark}[Theorem]{Remark}
\numberwithin{equation}{section}
\newcommand{\myref}[1]{(\ref{#1})}
\newcommand{\kL}{k\in\Lambda^{(r-1)}}
\author[Michael J.\ Schlosser]{Michael J.\ Schlosser$^{*}$}
\address{Fakult\"at f\"ur Mathematik, Universit\"at Wien,
Oskar-Morgenstern-Platz~1, A-1090 Vienna, Austria}
\email{michael.schlosser@univie.ac.at}
\thanks{$^*$Research partly supported by FWF Austrian Science Fund
grants F50-08 and P32305.}
\title[Bilateral Rogers--Ramanujan type identities]{Bilateral identities
of the Rogers--Ramanujan type}
\subjclass[2010]{Primary 11P84; Secondary 33D15}
\keywords{Rogers--Ramanujan type identities, $q$-series,
basic hypergeometric series, bilateral summations,
multilateral summations}
\dedicatory{Dedicated to the memory of Srinivasa Ramanujan}
\begin{document}

\begin{abstract}
We derive by analytic means a number of bilateral identities of the
Rogers--Ramanujan type.
Our results include bilateral extensions of the Rogers--Ramanujan
and the G\"ollnitz--Gordon identities, and of related identities
by Ramanujan, Jackson, and Slater.
We give corresponding results for multisums including multilateral
extensions of the Andrews--Gordon identities, of the Andrews--Bressoud
generalization of the G\"ollnitz--Gordon identities, of
Bressoud's even modulus identities, and other identities.
Our closed form bilateral and multilateral
summations appear to be the very first of their kind.
%
\end{abstract}

\maketitle


\section{Introduction}
For complex variables $a$ and $q$ with $|q|<1$ and
$k\in\mathbb Z\cup\{\infty\}$,
the $q$-shifted factorials are defined as follows (cf.\ \cite{GR}):
\begin{equation*}
  (a;q)_\infty:=\prod_{j=1}^\infty(1-aq^{j-1}),\qquad\text{and}
  \qquad (a;q)_k=\frac{(a;q)_\infty}{(aq^k;q)_\infty}.
\end{equation*}
Specifically,
\begin{equation*}
(a;q)_k:=
\begin{cases}1&\text{for $k=0$},\\*
\prod_{j=1}^k(1-aq^{j-1})
&\text{for $k>0$},\\*
\prod_{j=1}^{-k}(1-aq^{-j})^{-1}
&\text{for $k<0$}.
\end{cases}
\end{equation*}
The variable $q$ is referred to as the \textit{base}.
For brevity, we frequently use the compact notation
\begin{equation*}
(a_1,\dots,a_m;q)_k=(a_1;q)_k\cdots(a_m;q)_k,
\end{equation*}
where $m$ is a positive integer.
Unless stated otherwise, all the summations in this paper converge
absolutely (subject to the condition $|q|<1$, which we assume).

The first and second Rogers--Ramanujan identities,
\begin{subequations}\label{rr}
\begin{align}\label{rra}
\sum_{k=0}^\infty\frac{q^{k^2}}{(q;q)_k}&=
\frac1{(q,q^4;q^5)_\infty},\\*
\label{rrb}
\sum_{k=0}^\infty\frac{q^{k(k+1)}}{(q;q)_k}&=
\frac1{(q^2,q^3;q^5)_\infty},
\end{align}
\end{subequations}
have a prominent history. They were first discovered and proved in 1894
by Rogers~\cite{Ro2}, and then independently rediscovered by the legendary
Indian mathematician Ramanujan some time before 1913
(cf.\ Hardy~\cite{Ha}).
They were also independently discovered and proved in 1917 by Schur~\cite{Sc}.
About the pair of identities in \myref{rr} Hardy~\cite[p.~xxxiv]{HSW} remarked
\begin{quotation}
`It would be difficult to find more beautiful formulae than the
``Rogers--Ramanujan'' identities, $\ldots$'
\end{quotation}
It is not clear how Ramanujan was led to discover \myref{rr}.
Bhatnagar~\cite{Bh} describes a method to conjecture these identities.
A basic hypergeometric proof of \myref{rr} was found by Watson~\cite{Wa},
who observed that these identities can be obtained from
the (now called) Watson transformation by taking suitable limits
and applying instances of Jacobi's triple product identity.
Watson's proof is not the only early proof using hypergeometric machinery.
In \cite{Ro2}, Rogers obtained directly an identity which nowadays
is called the ``Rogers--Selberg identity'' (and which happens to be
a special case of the Watson transformation that was discovered much
later) from which
the two Rogers--Ramanujan identities follow by specialization
and instances of Jacobi's triple product identity.

The Rogers--Ramanujan identities are deep identities which
have found interpretations in combinatorics, number theory,
orthogonal polynomials, probability theory, statistical mechanics,
representations of Lie algebras, vertex operator algebras, knot theory,
and conformal field
theory~\cite{AnRR,An86,AD,B,BM,Br2,BZ,BP,Co,Fu,GIS,GM,Go0,LM,LW,Mac,Pr,Sc,W}.
(We do not claim that the list of areas given is complete. Moreover,
the selected references are only representative samples of papers
on interpretations of the Rogers--Ramanujan identities.)
A recent highlight in the theory concerns the construction
of these identities for higher-rank Lie algebras~\cite{GOW}.

A pair of identities similar to \myref{rr} are
the first and second G\"ollnitz--Gordon identities,
\begin{subequations}\label{gg}
\begin{align}\label{gga}
\sum_{k=0}^\infty\frac{(-q;q^2)_k}{(q^2;q^2)_k}
q^{k^2}&=
\frac 1{(q,q^4,q^7;q^8)_\infty},\\*
\label{ggb}
\sum_{k=0}^\infty\frac{(-q;q^2)_k}{(q^2;q^2)_k}
q^{k(k+2)}&=
\frac 1{(q^3,q^4,q^5;q^8)_\infty}.
\end{align}
\end{subequations}
These appeared in a combinatorial study of integer partitions
in unpublished work by G\"ollnitz in 1960 (\cite{Goe0}, see also \cite{Goe})
and were rediscovered in 1965 by Gordon~\cite{Go}.
However, they were recorded more than 40 years earlier by
Ramanujan in his lost notebook, see \cite[p.~36--37, Entries 1.7.11--12]{AB},
and were also published in 1952 by Slater~\cite{Sl} as specific entries
in her famous list of 130 identities of the Rogers--Ramanujan type. 
The systematic study of such identities
had been initiated by Bailey~\cite{Ba1,Ba2} a few years earlier.
A more complete list of identities of the Rogers--Ramanujan type
was recently given by McLaughlin, Sills and Zimmer~\cite{McLSiZi}.
Further such identities were given by Chu and Zhang~\cite{CZ}.
McLaughlin, Sills and Zimmer's list is reproduced
(with some typographical errors corrected)
in Appendix~A of Sills' recent book \cite{Si}, which provides an
excellent introduction to the Rogers--Ramanujan identities.

The analytic identities in \myref{rr} and \myref{gg}
admit partition-theoretic interpretations (cf.\ \cite{An}).
Because of the specific form of the $q$-products on the right-hand sides, 
the identities in \myref{rr}, resp.\ \myref{gg},
are often classified as modulus $5$ and modulus $8$ identities, respectively.

Another identity intimately linked to Ramanujan's name is the following
summation formula (cf.\ \cite[Appendix~(II.29)]{GR})
\begin{equation}\label{1psi1}
\sum_{k=-\infty}^\infty\frac{(a;q)_k}{(b;q)_k}z^k=
\frac{(q,az,q/az,b/a;q)_\infty}{(b,z,b/az,q/a;q)_\infty},\qquad|b/a|<|z|<1.
\end{equation}
This identity, commonly known as Ramanujan's ${}_1\psi_1$
summation, is a bilateral extension of
the $q$-binomial theorem (cf.\ \cite[Appendix~(II.3)]{GR})
\begin{equation}\label{1phi0}
\sum_{k=0}^\infty\frac{(a;q)_k}{(q;q)_k}z^k=
\frac{(az;q)_\infty}{(z;q)_\infty},\qquad|z|<1,
\end{equation}
which is a fundamental identity in the theory of basic
hypergeometric series.
Hardy described \myref{1psi1}, which Ramanujan had
noted but did not publish, as ``a remarkable formula with many
parameters''~\cite[Eq.~(12.12.2)]{Ha2}. Importantly,
\myref{1psi1} contains Jacobi's triple product identity \myref{jtpi}
as a limiting case, an identity which plays a key role in many of the proofs
of identities of the Rogers--Ramanujan type (and which we also
make heavy use of in this paper).

Knowing that the $q$-binomial theorem \myref{1phi0}
extends to a bilateral summation, one can ask the same question
about the Rogers--Ramanujan and G\"ollnitz--Gordon identities in \myref{rr}
and \myref{gg}. While some authors have studied properties of
bilateral series which extend the series in \myref{rr} (see \cite{An70,EI,IZ}),
no closed form bilateral summations which include the evaluations in
\myref{rr} (or \myref{gg}) as special cases have been obtained yet.

In this paper, we derive \textit{bilateral} extensions of the
Rogers--Ramanujan and G\"ollnitz--Gordon identities in \myref{rr}
and \myref{gg} and provide a number of related results.
Our main results for single series are given in
Section~\ref{sec:thms}, together with several noteworthy corollaries.
The proofs of the main results of Section~\ref{sec:thms},
namely Theorems~\ref{thm5}, \ref{thm8} and \ref{thm:bilcc}
are deferred to Section~\ref{sec:bilRR}. The proofs are analytic and
involve a method similar to that used by Watson in \cite{Wa}
to prove the classical Rogers--Ramanujan identities.
In particular, we utilize suitable limiting cases of a bilateral
basic hypergeometric transformation formula of Bailey
in combination with special instances of
Jacobi's triple product identity to establish the respective identities.
In Section~\ref{sec:multRR} multisum extensions of our results
are given, which in particular include multilateral extensions
of the Andrews--Gordon identities and of the Andrews--Bressoud
generalization of the G\"ollnitz--Gordon identities,
in addition to other multisum identities.
We end our paper with some concluding remarks in Section~\ref{sec:conc}.


\section{Main results and corollaries in the single series case}
\label{sec:thms}

Our first result is a bilateral extension of the two
Rogers--Ramanujan identities in \myref{rr}.

\begin{Theorem}\label{thm5}
We have the following two bilateral summations:
\begin{subequations}\label{bilrrg}
\begin{align}\label{bilrrg1}
\sum_{k=-\infty}^\infty\frac{q^{k^2}}{(zq;q)_k}z^{2k}
={}&\frac{(1/z;q)_\infty}
{(1/z^2,z^2q;q)_\infty}(q^5;q^5)_\infty\notag\\*
&\times
\left[(z^5q^3,z^{-5}q^2;q^5)_\infty
+z^{-1}(z^5q^2,z^{-5}q^3;q^5)_\infty\right],\\*
\label{bilrrg2}
\sum_{k=-\infty}^\infty\frac{q^{k(k+1)}}{(zq;q)_k}z^{2k}
={}&\frac{(1/z;q)_\infty}
{(1/z^2,z^2q;q)_\infty}\notag(q^5;q^5)_\infty\\*
&\times
\left[(z^5q^4,z^{-5}q;q^5)_\infty+z^{-3}(z^5q,z^{-5}q^4;q^5)_\infty\right],
\end{align}
\end{subequations}
for complex $z$ such that $z\not\in\{q^{-1},q^{-2},\ldots\}$.
\end{Theorem}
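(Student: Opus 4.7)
The plan is to adapt Watson's \cite{Wa} analytic proof of the classical Rogers--Ramanujan identities \myref{rr} to the bilateral setting. In Watson's unilateral approach, one takes a suitable specialization of the $_8\phi_7$ transformation, sends several of its parameters to infinity so that the left-hand side collapses to a Rogers--Ramanujan series, and then identifies the product on the right via an application of Jacobi's triple product identity. Here, since the left-hand side of \myref{bilrrg1}--\myref{bilrrg2} is bilateral, I would replace the unilateral $_8\phi_7$ transformation by a suitable bilateral transformation formula of Bailey, in which a very-well-poised bilateral series is expressed as a sum of two infinite-product factors times residual unilateral series. The characteristic two-term bracket on the right of \myref{bilrrg1} and \myref{bilrrg2} is a clear fingerprint of such a two-term bilateral transformation.

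The central computation is then the specialization. I would choose the parameters in Bailey's transformation so that, after sending several of them to infinity in the Watsonian manner, the summand on the left reduces exactly to $q^{k^2}z^{2k}/(zq;q)_k$ for \myref{bilrrg1}, and to $q^{k(k+1)}z^{2k}/(zq;q)_k$ for \myref{bilrrg2}, the latter being obtained by a single $q$-shift of one of the remaining parameters. The $(zq;q)_k$-denominator must come from a parameter that is retained and tied to $z$, while the Gaussian $q^{k^2}$ on the left arises (as in Watson's argument) from the accumulated $q^{\binom{k}{2}}$-contributions of the parameters sent to infinity, after cancellation against the argument of the series. On the right, the same specialization and limits should simplify the infinite-product prefactor to $\frac{(1/z;q)_\infty}{(1/z^2,z^2q;q)_\infty}$ and reduce each of the two residual series to a bilateral theta sum in the variable $z^5$ with Gaussian exponent $5k(k-1)/2$. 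Applying Jacobi's triple product identity at base $q^5$ then evaluates these two theta sums as the factors $(z^5q^3,z^{-5}q^2;q^5)_\infty$ and $(z^5q^2,z^{-5}q^3;q^5)_\infty$, together with the $(q^5;q^5)_\infty$ prefactor and the scalar weights $z^{-1}$ and $z^{-3}$ appearing inside the brackets.

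The main obstacle is pinning down the exact specialization: one must balance several competing requirements, namely (i) the Watsonian mechanism that produces a Gaussian exponent and a mod-$5$ theta structure, which forces several parameters to infinity, (ii) the retention of one parameter finite and tied to $z$, required so that the $(zq;q)_k$-denominator survives on the left, (iii) the uniform convergence of the underlying series so that the limits are legitimate, and (iv) the correct simplification of the infinite-product prefactor on the right into the compact form $\frac{(1/z;q)_\infty}{(1/z^2,z^2q;q)_\infty}$. Once the correct assignment is identified, the verification is a direct, if intricate, $q$-series calculation, and \myref{bilrrg2} follows from \myref{bilrrg1} by the same mechanism with a single parameter shifted by a factor of $q$.
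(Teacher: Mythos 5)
Your overall strategy --- bilateralize Watson's argument by starting from a bilateral transformation of Bailey, specialize and send parameters to infinity so that the left-hand side becomes $\sum_k q^{k^2}z^{2k}/(zq;q)_k$, and finish with Jacobi's triple product identity at base $q^5$ --- is the route the paper actually takes. But as written there is a genuine gap: the decisive content of the proof is precisely the ``specialization'' you defer, and your one concrete structural claim about it points at the wrong kind of formula. The two-term bracket in \myref{bilrrg1}--\myref{bilrrg2} is \emph{not} the fingerprint of a two-term bilateral transformation in which a very-well-poised bilateral series is expanded as a sum of two product factors times residual series. In the derivation one uses Bailey's \emph{one-term} transformation \myref{bil68} of a general ${}_2\psi_2$ into a single multiple of a very-well-poised ${}_6\psi_8$; the non-well-poised series to be evaluated sits on the general side, and it is the very-well-poised side that collapses. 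Concretely: let $f\to\infty$ and substitute $(a,c,d,e)\mapsto(az,az/b,az/c,a)$ to obtain \myref{bil58}; then let $c\to0$, substitute $(b,z)\mapsto(z,bz/a)$ and let $a\to\infty$, which gives \myref{c0}, i.e.\ $\sum_k q^{k^2}(bz)^k/(zq;q)_k$ equals an infinite-product factor times $\sum_k \frac{1-bzq^{2k}}{1-bz}\,\frac{(b;q)_k}{(zq;q)_k}\,q^{5\binom k2}(-b^2z^3q^2)^k$. The two terms appear only at the very end: setting $b=z$ (resp.\ $b=zq$) makes $(b;q)_k/(zq;q)_k$ collapse, the surviving factor $(1+zq^{k})$ (resp.\ $(1-z^2q^{1+2k})$) is split into two pieces, and Jacobi's triple product identity \myref{jtpi} is applied \emph{twice}, once to each piece; that is what produces the bracket and the weights $z^{-1}$ and $z^{-3}$. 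Note also that \myref{bilrrg2} is not obtained by rerunning the limiting process with a shifted parameter: both identities are specializations ($b=z$ and $b=zq$) of the same intermediate identity \myref{c0}.

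So you have the right family of tools, but not yet a proof: you must exhibit the transformation and the exact chain of substitutions and limits (with a justification of the termwise limits and of absolute convergence), and you should abandon the expectation that a two-term bilateral transformation is needed --- searching for such a formula would lead you away from the direct derivation sketched above.
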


The $z\to 1$ limit of \myref{bilrrg1} gives \myref{rra},
while the $z\to 1$ limit of \myref{bilrrg2} gives \myref{rrb}.

\begin{Remark}\label{remga}
As was kindly brought to the author's attention by George Andrews
after being shown an earlier version of this paper, a result related
to the series on the left-hand side of \myref{bilrrg2}
was found by Andrews in 1970~\cite[Thm.~3]{An70}, namely:
\begin{subequations}
\textit{Let}
\begin{align}\label{gaid}
&g(z)=(-z;q)_\infty\sum_{k=-\infty}^\infty\frac{q^{k(k-1)}}{(-z;q)_k}z^{2k},\\*
\intertext{\textit{then}}
&\frac{g(z)+g(-z)}2=\frac{(q^2,-z^2,-z^{-2}q^2;q^2)_\infty}
{(q;q^2)_\infty(q^4,q^{16};q^{20})_\infty}.
\end{align}
\end{subequations}
The last expression shows that the even part
of $g(z)$ (or of $g(-z)$) can be expressed in closed form.
The method used in \cite{An70} can also be used to
express the odd part of $g(z)$ in closed form (which was not
done in \cite{An70}),
which together with \myref{gaid} can be used to
obtain an evaluation for $g(z)$, similar to \myref{bilrrg2}.
\end{Remark}
\begin{Remark}\label{rem:ref}
  While in Section~\ref{sec:bilRR}
we show how one can derive the two identities in \myref{bilrrg}
and the the other results listed in this section by using
a powerful transformation formula for bilateral basic
hypergeometric series derived by Bailey
(see \myref{bil68}) without appealing to
the classical Rogers--Ramanujan identities in \myref{rr},
the anonymous Referee pointed out a way how one can easily prove
(actually, verify) the identities in \myref{bilrrg} by an analytic,
functional equation approach and making use of \myref{rr}.
We sketch the details for the Referee's verification of \myref{bilrrg1}.
The details for verifying \myref{bilrrg2} are similar.

\textit{Verification proof of} \myref{bilrrg1}. \
Both sides of \myref{bilrrg1} satisfy the functional equation
$$
f(z;q)=\frac{z^2q}{1-zq}\, f(zq;q).
$$
This implies that the ratio of the left- and right-hand sides
is an elliptic (multiplicatively $q$-periodic) function.
The right-hand side of \myref{bilrrg1} has poles at
$z=-q^m$, $z=\pm q^{1/2+m}$ and $z=\pm q^{-1/2-m/2}$,
for $m$ a nonnegative integer.

Because of the functional equation it is enough to consider the poles at
$z=-1$, $z=\pm q^{-1/2}$ and $z=\pm q^{-1}$.
The four poles at $z=-1$, $z=q^{-1/2}$, $z=-q^{-1/2}$ and $z=-q^{-1}$
have zero residue and the only pole with non-zero residue is
the pole at $z=q^{-1}$.
Indeed, both sides of the identity have poles at $z=q^{-1-m}$
for nonnegative integers $m$. Focusing on $z=q^{-1}$
(again, by the $q$-periodicity this is enough)
one checks that the residue on the sum side at
$z=q^{-1}$ is
$$-q^{-2}\sum_{k>0} \frac{q^{(k-1)^2}}{(q;q)_{k-1}} =
-q^{-2} \sum_{k\ge 0} \frac{q^{k^2}}{(q;q)_k}
= \frac{-q^{-2}}{(q,q^4;q^5)_{\infty}},$$
by the Rogers--Ramanujan identity \myref{rra}. 
(Because $|q|<1$, computing the poles term-wise is justified).
This agrees with the residue of $z=q^{-1}$ on the right.
We conclude that the ratio of the left- and right-hand sides
is a constant. Taking $z\to 1$ shows that this constant is $1$,
which completes the verification of \myref{bilrrg1}. \hfill\qed

Similar verification proofs can be given for the other bilateral
identities that we obtained which involve the variable $z$.

All results in Sections~\ref{sec:bilRR} and \ref{sec:multRR}
are obtained by specializing a sole master identity,
namely \myref{bil68}.
It is feasible for an interested reader to check whether
any other relevant specializations of \myref{bil68}
were missed (in our collection of bilateral Rogers--Ramanujan
type identities) that would yield further noteworthy identities.
\end{Remark}

As consequence of Theorem~\ref{thm5},
we obtain the following four bilateral summations:

\begin{Corollary}[Bilateral modulus $25$ identities]\label{cor:bilrr}
\begin{subequations}\label{bilrr}
\begin{align}\label{bilrra}
\sum_{k=-\infty}^\infty
  \frac{q^{k(5k-3)}}{(q;q^5)_k}
  &=\frac{(q^4;q^5)_\infty(q^{10},q^{15},q^{25};q^{25})_\infty}
                               {(q^2,q^3;q^5)_\infty},\\*
\label{bilrrb}\sum_{k=-\infty}^\infty
\frac{q^{(k-1)(5k-1)}}{(q^2;q^5)_{k}}&=
\frac{(q^{3};q^5)_\infty(q^5,q^{20},q^{25};q^{25})_\infty}{(q,q^4;q^5)_\infty},\\*
\label{bilrrc}\sum_{k=-\infty}^\infty
\frac{q^{k(5k-4)}}{(q^3;q^5)_k}&=
\frac{(q^2;q^5)_\infty(q^5,q^{20},q^{25};q^{25})_\infty}{(q,q^4;q^5)_\infty},\\*
\label{bilrrd}\sum_{k=-\infty}^\infty
\frac{q^{k(5k+3)}}{(q^4;q^5)_k}&=
\frac{(q;q^5)_\infty(q^{10},q^{15},q^{25};q^{25})_\infty}{(q^2,q^3;q^5)_\infty}.
\end{align}
\end{subequations}
\end{Corollary}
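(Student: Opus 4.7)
The plan is to derive each of the four identities \myref{bilrra}--\myref{bilrrd} as a direct specialization of Theorem~\ref{thm5}. In both \myref{bilrrg1} and \myref{bilrrg2} I would first replace $q$ by $q^5$ throughout and then set $z=q^{-a}$ for an appropriate $a\in\{1,2,3,4\}$. Matching the quadratic exponent on the left-hand side pins down the correspondence: \myref{bilrra} comes from \myref{bilrrg2} at $z=q^{-4}$; \myref{bilrrb} from \myref{bilrrg1} at $z=q^{-3}$, after first extracting a factor $q$ to absorb the constant in $(k-1)(5k-1)=5k^2-6k+1$; \myref{bilrrc} from \myref{bilrrg1} at $z=q^{-2}$; and \myref{bilrrd} from \myref{bilrrg2} at $z=q^{-1}$. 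These specializations immediately turn the left-hand side of Theorem~\ref{thm5} into that of the corresponding identity in the corollary.

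The crucial observation on the right-hand side is that, for each of these four choices of $z$, one of the two Pochhammer products inside the square bracket of Theorem~\ref{thm5} acquires a factor $(1;q^{25})_\infty=0$ and therefore vanishes. For \myref{bilrrc} and \myref{bilrrd} the surviving bracket factor is already of the form $(q^5,q^{20};q^{25})_\infty$ or $(q^{10},q^{15};q^{25})_\infty$, and the prefactor $\frac{(q^a;q^5)_\infty}{(q^{2a},q^{-2a+5};q^5)_\infty}(q^{25};q^{25})_\infty$ is also already in the shape demanded by the claimed right-hand side, so these two cases follow immediately. For \myref{bilrra} and \myref{bilrrb}, by contrast, both the surviving bracket factor and part of the prefactor involve Pochhammer symbols with \emph{negative} base exponents and must be simplified.

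That simplification is routine: each $(q^{-n};q^m)_\infty$ will be reduced via
\[
(q^{-n};q^m)_\infty=(1-q^{-n})(q^{m-n};q^m)_\infty,\qquad 1-q^{-n}=-q^{-n}(1-q^n),
\]
and the monomial powers of $q$ and the $\pm$ signs thereby produced combine with the $z^{-1}$ or $z^{-3}$ factor multiplying the surviving bracket to yield exactly the infinite products and denominators appearing in \myref{bilrra} and \myref{bilrrb}. I do not expect any serious obstacle; the one spot requiring a little care is case \myref{bilrrb}, where the overall factor $q$ extracted on the left must be tracked through the right-hand side simplification and shown to cancel against a compensating $q^{-1}$ produced by the sign flips.
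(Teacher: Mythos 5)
Your proposal is correct and is exactly the paper's own derivation: replace $q$ by $q^5$ in Theorem~\ref{thm5} and set $z=q^{-4},q^{-3},q^{-2},q^{-1}$ (using \myref{bilrrg2}, \myref{bilrrg1}, \myref{bilrrg1}, \myref{bilrrg2}, respectively), the same assignments the paper states. The extra details you supply — one bracket term vanishing because of a factor $(1;q^{25})_\infty$, and the bookkeeping of the negative-exponent Pochhammer symbols together with the factor $q$ in \myref{bilrrb} — are accurate and merely make explicit what the paper leaves as routine.
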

Combinatorial interpretations of the left- and right-hand sides
of these identities can be given
(see the discussion in Section~\ref{sec:conc}); since
the formulations are rather lengthy, we do not provide the details here.

To deduce the bilateral identities in Corollary~\ref{cor:bilrr},
first replace $q$ by $q^5$ in \myref{bilrrg} and then observe that
the respective $z=q^{-3}$ and  $z=q^{-2}$ cases of \myref{bilrrg1}
give \myref{bilrrb} and \myref{bilrrc}, whereas
the respective $z=q^{-4}$ and $z=q^{-1}$ cases of \myref{bilrrg2}
give \myref{bilrra} and \myref{bilrrd}.

\begin{Remark}\label{rem:th}
Tim Huber has kindly informed the author how the series appearing
in Corollary~\ref{cor:bilrr} are related to weight $1/5$ modular forms
for $\Gamma(5)$.
(See \cite{H} for a theory of theta functions to the quintic base).

In particular, writing $q=e^{2\pi{\rm i}\tau}$ (where ${\rm i}^2=-1$),
the following functions $A,B$ are weight $1/5$
modular forms for $\Gamma(5)$ with a fifth root of unity as a multiplier:
\begin{subequations}
\begin{align}
  A(\tau)&=\frac{q^{1/5}(q;q)_\infty^{2/5}}{(q^2;q^3;q^5)_\infty}\notag\\*
         &=\frac{q^{1/5}(q;q)_\infty^{2/5}}{(q;q^5)_\infty
           (q^{10},q^{15};q^{25};q^{25})_\infty}
           \sum_{k=-\infty}^\infty\frac{q^{k(5k+3)}}{(q^4;q^5)_k}\notag\\*
         &=\frac{q^{1/5}(q;q)_\infty^{2/5}}{(q^4;q^5)_\infty
           (q^{10},q^{15};q^{25};q^{25})_\infty}
           \sum_{k=-\infty}^\infty\frac{q^{k(5k-3)}}{(q;q^5)_k},\\
  B(\tau)&=\frac{(q;q)_\infty^{2/5}}{(q;q^4;q^5)_\infty}\notag\\*
         &=\frac{(q;q)_\infty^{2/5}}{(q^3;q^5)_\infty
           (q^{5},q^{20};q^{25};q^{25})_\infty}
           \sum_{k=-\infty}^\infty\frac{q^{(k-1)(5k-1)}}{(q^2;q^5)_k}\notag\\*
         &=\frac{(q;q)_\infty^{2/5}}{(q^2;q^5)_\infty
           (q^{2},q^{20};q^{25};q^{25})_\infty}
  \sum_{k=-\infty}^\infty\frac{q^{k(5k-4)}}{(q^3;q^5)_k}.
\end{align}
\end{subequations}
The graded ring of modular forms of integer weight for $\Gamma_1(5)$
is generated by $A^5$ and $B^5$. Moreover, a basis for
the space of weight $1$ modular forms on $\Gamma_1(5)$
is given by
$$
A^5,\; A^4B,\; A^3 B^2,\; A^2 B^3,\; A B^4,\; B^5.
$$

It is possible to make similar connections to modularity
for some of the other series appearing in this section,
in particular, for the series in Corollary~\ref{cor:bilgg}.
\end{Remark}

Our next result is a bilateral extension of the two
G\"ollnitz--Gordon identities in \myref{gg}.

\begin{Theorem}\label{thm8}
We have the following two bilateral summations:
\begin{subequations}\label{bilggg}
\begin{align}\label{bilggg1}
\sum_{k=-\infty}^\infty\frac{(-zq;q^2)_k}{(zq^2;q^2)_k}
q^{k^2}z^k
={}&\frac{(-zq,1/z;q^2)_\infty}
{(z^2q^2,1/z^2;q^2)_\infty}(q^8;q^8)_\infty\notag\\*
&\times\left[(z^4q^5,z^{-4}q^3;q^8)_\infty+
z^{-1}(z^4q^3,z^{-4}q^5;q^8)_\infty
\right],\\*\label{bilggg2}
\sum_{k=-\infty}^\infty\frac{(-zq;q^2)_k}{(zq^2;q^2)_k}
q^{k(k+2)}z^k
={}&\frac{(-zq,1/z;q^2)_\infty}
{(z^2q^2,1/z^2;q^2)_\infty}(q^8;q^8)_\infty\notag\\*
&\times\left[(z^4q^7,z^{-4}q;q^8)_\infty+
z^{-3}(z^4q,z^{-4}q^7;q^8)_\infty
\right],
\end{align}
\end{subequations}
for complex $z$ such that $z\not\in
\{q^{-2},q^{-4},q^{-6},\ldots\}\cup\{-q^{-1},-q^{-3},-q^{-5},\ldots\}$.
\end{Theorem}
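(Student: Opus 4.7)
The plan is to mimic the proof of Theorem~\ref{thm5}, but at base $q^2$ rather than $q$, in order to accommodate the extra Pochhammer factor $(-zq;q^2)_k/(zq^2;q^2)_k$ that characterizes the G\"ollnitz--Gordon case. Both sides of \myref{bilggg} share exactly the same structural shape as those of \myref{bilrrg}, with the base $q$ replaced by $q^2$ and the right-hand modulus changing from $5$ to $8$. This suggests that the same bilateral basic hypergeometric transformation of Bailey the authors employ for Theorem~\ref{thm5}, rerun at base $q^2$ and specialized so that $(-zq;q^2)_k/(zq^2;q^2)_k$ is isolated on the left, will deliver \myref{bilggg}.

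Concretely, I would first take Bailey's bilateral transformation with $q$ replaced by $q^2$ and specialize one numerator and one denominator parameter to $-zq$ and $zq^2$, respectively. I would then let the remaining free parameters tend to infinity in a coordinated fashion; the standard limit $(a;q^2)_k/a^k\to(-1)^k q^{k(k-1)}$ as $a\to\infty$ is what produces the quadratic factor $q^{k^2}z^k$ on the left of \myref{bilggg1}, and a parallel choice of accompanying prefactor yields the variant $q^{k(k+2)}z^k$ for \myref{bilggg2}. Under the same limits, most of the $q^2$-Pochhammer products on the right-hand side of the transformation collapse to the closed prefactor
\begin{equation*}
\frac{(-zq,1/z;q^2)_\infty}{(z^2q^2,1/z^2;q^2)_\infty},
\end{equation*}
and what remains is a single residual bilateral theta-type series of the shape $\sum_{k\in\mathbb Z}(-1)^k z^{4k}q^{8k^2+ck}$ for some specific $c\in\{1,3\}$ (depending on which of the two identities is being proved).

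To finish, I would dissect this residual sum modulo $2$ in $k$ and then apply Jacobi's triple product identity to each residue class at base $q^8$. After reindexing, the even and odd parts produce respectively the two infinite products $(z^4q^5,z^{-4}q^3;q^8)_\infty$ and $(z^4q^3,z^{-4}q^5;q^8)_\infty$ for \myref{bilggg1} (and $(z^4q^7,z^{-4}q;q^8)_\infty$ and $(z^4q,z^{-4}q^7;q^8)_\infty$ for \myref{bilggg2}), joined by the $(q^8;q^8)_\infty$ factor that emerges from the triple product, with the prefactor $z^{-1}$ (resp.\ $z^{-3}$) arising naturally from the odd residue class via a shift in the summation index.

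The main technical obstacle is calibrating the limit step: the Bailey parameters and the direction of their limits must be chosen carefully so that (i) the surviving quadratic exponent is precisely $8k^2\pm k$ (resp.\ $8k^2\pm3k$), matching the modulus~$8$ on the right, and (ii) all parasitic $q^2$-Pochhammer factors cancel cleanly against the stated prefactor, with no spurious residue. A secondary but routine point is to verify absolute convergence of the bilateral series at each stage so that limits and summations may be interchanged; this follows from the ratio test in a nonempty annular domain in $z$, after which both identities extend to their full natural domain by analytic continuation.
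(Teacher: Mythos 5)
Your overall strategy is indeed the paper's: specialize Bailey's bilateral transformation (in the form \myref{bil58}) at base $q^2$ and finish with Jacobi's triple product identity \myref{jtpi}. The paper's actual calibration is: let $c\to0$ in \myref{bil58}, replace $q$ by $q^2$, and set $(a,b,z)\mapsto(-zq,z,-zq^{-1})$; this produces the left-hand side of \myref{bilggg1} exactly, and \myref{bilggg2} is then obtained for free from \myref{bilggg1} by the substitution $z\mapsto -1/zq$ together with reversing the order of summation, rather than by a second parallel limit computation. Since you explicitly defer the parameter calibration (``the main technical obstacle''), the part of the argument that constitutes the proof is not actually carried out.

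More seriously, the mechanism you propose for the last step is wrong. After the specialization, what remains on the right is not a single theta series of the shape $\sum_{k}(-1)^kz^{4k}q^{8k^2+ck}$; up to the stated prefactor it is
\begin{equation*}
\sum_{k=-\infty}^\infty\bigl(1+zq^{2k}\bigr)(-1)^k z^{4k}q^{4k^2-k},
\end{equation*}
where the binomial factor $1+zq^{2k}$ descends from the very-well-poised factor $1-azq^{2k}$ (here $az=z^2$) after cancellation against the surviving Pochhammer ratio. The two products in the bracket of \myref{bilggg1} arise by splitting the sum on this factor and applying \myref{jtpi} to each of the two resulting complete theta series (this is the paper's ``two applications''), yielding $\sum_k(-1)^kz^{4k}q^{4k^2+k}=(q^8,z^4q^5,z^{-4}q^3;q^8)_\infty$ and $\sum_k(-1)^kz^{4k}q^{4k^2-k}=(q^8,z^4q^3,z^{-4}q^5;q^8)_\infty$. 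A parity dissection of your claimed series cannot produce this: dissecting $\sum_k(-1)^kz^{4k}q^{8k^2+ck}$ modulo $2$ gives theta products in base $q^{32}$ involving $z^{\pm8}$, not the base-$q^8$, $z^{\pm4}$ products on the right of \myref{bilggg1}; and the two bracket terms correspond to the distinct quadratics $4k^2+k$ and $4k^2-k$ (resp.\ $4k^2\pm3k$ for \myref{bilggg2}), which cannot be unified into a single bilateral series with one quadratic exponent and then separated by residue class. So both the stated exponent $8k^2$ and the even/odd splitting are incorrect, and the finish as described would fail; the correct split is on the factor $1+zq^{2k}$, not on the parity of $k$.
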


The $z\to 1$ limit of \myref{bilggg1} gives \myref{gga},
while the $z\to 1$ limit of \myref{bilggg2} gives \myref{ggb}.

As consequence of Theorem~\ref{thm8},
we obtain the following four bilateral summations:

\begin{Corollary}[Bilateral modulus $32$ identities]\label{cor:bilgg}
\begin{subequations}\label{bilgg}
\begin{align}\label{bilgga}
\sum_{k=-\infty}^\infty\frac{(-q^5;q^8)_{k}}{(q;q^8)_{k+1}}
q^{(k+2)(4k+1)}&=\frac{(q^7;q^8)_\infty(q^8;q^{16})_\infty
(q^{32};q^{32})_\infty}{(q^5,q^6;q^8)_\infty(q^2;q^{16})_\infty},\\*
\label{bilggb}\sum_{k=-\infty}^\infty\frac{(-q^7;q^8)_{k}}{(q^3;q^8)_{k+1}}
q^{k(4k+3)}&=\frac{(q^5;q^8)_\infty(q^8;q^{16})_\infty
(q^{32};q^{32})_\infty}{(q^2,q^7;q^8)_\infty(q^6;q^{16})_\infty},\\*
\label{bilggc}\sum_{k=-\infty}^\infty\frac{(-q;q^8)_k}{(q^5;q^8)_k}
q^{k(4k-3)}&=\frac{(q^3;q^8)_\infty(q^8;q^{16})_\infty
(q^{32};q^{32})_\infty}{(q,q^6;q^8)_\infty(q^{10};q^{16})_\infty},\\*
\label{bilggd}\sum_{k=-\infty}^\infty\frac{(-q^3;q^8)_k}{(q^7;q^8)_k}
q^{k(4k+7)}&=\frac{(q;q^8)_\infty(q^8;q^{16})_\infty
(q^{32};q^{32})_\infty}{(q^2,q^3;q^8)_\infty(q^{14};q^{16})_\infty}.
\end{align}
\end{subequations}
\end{Corollary}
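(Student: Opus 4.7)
The strategy mirrors the derivation of Corollary~\ref{cor:bilrr} from Theorem~\ref{thm5}. Since Theorem~\ref{thm8} has base $q^8$ and the target identities have base $q^{32}$, I would first replace $q$ by $q^4$ throughout both \myref{bilggg1} and \myref{bilggg2}. Then I would choose four specializations of $z$ as negative or small positive powers of $q$, one for each identity in the corollary. A quick comparison of the quadratic and linear exponents in $k$ suggests:
\begin{itemize}
\item \myref{bilgga}: use \myref{bilggg2} with $z=q$ (gives exponent $4k^2+9k$, plus a constant shift);
\item \myref{bilggb}: use \myref{bilggg1} with $z=q^{3}$;
\item \myref{bilggc}: use \myref{bilggg1} with $z=q^{-3}$;
\item \myref{bilggd}: use \myref{bilggg2} with $z=q^{-1}$.
\end{itemize}

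For the left-hand side I would match the resulting Pochhammer ratios with those in Corollary~\ref{cor:bilgg} via the elementary shift rules
\begin{equation*}
(q^a;q^8)_{k+1}=(1-q^a)(q^{a+8};q^8)_k,\qquad
(-q^{-1};q^8)_k=(1+q^{-1})(-q^7;q^8)_{k-1},
\end{equation*}
possibly absorbing an overall factor of the form $q^c/(1-q^d)$ on both sides. In the cases \myref{bilgga}, \myref{bilggb} this pulls out a $1/(1-q)$ or $1/(1-q^3)$; in the cases \myref{bilggc} and \myref{bilggd} the match is immediate after the substitution.

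For the right-hand side, one of the two bracketed theta-like products in Theorem~\ref{thm8} acquires a factor $(q^{0};q^{32})_\infty=0$ after specialization, so exactly one of the two summands survives. The remaining product is then written as
\begin{equation*}
\frac{(-zq^4,1/z;q^8)_\infty}{(z^2q^8,1/z^2;q^8)_\infty}\,(q^{32};q^{32})_\infty\cdot z^{\pm}(q^{8},q^{24};q^{32})_\infty
\end{equation*}
(or its partner). I would simplify the Pochhammers at negative powers of $q$ via $(q^{-a};q^8)_\infty=(1-q^{-a})(q^{8-a};q^8)_\infty$, the identities $(q^2;q^8)_\infty=(q^2,q^{10};q^{16})_\infty$ and $(q^8,q^{24};q^{32})_\infty=(q^8;q^{16})_\infty$ arising from splitting a base by evens and odds, and the dissection $(-q^a;q^8)_\infty(q^a;q^8)_\infty=(q^{2a};q^{16})_\infty$.

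The main obstacle is bookkeeping rather than a genuine conceptual difficulty: cancelling all the $(1-q^c)$ factors produced by the Pochhammer shifts against those produced by the negative-power simplifications, and reorganising the $q^{32}$-, $q^{16}$- and $q^8$-based products into the precise form displayed in Corollary~\ref{cor:bilgg}. Once this product-rewriting lemma is in hand for one of the four identities, the other three follow by the same recipe with only the parameters changed.
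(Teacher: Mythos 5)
Your proposal is correct and is essentially the paper's own derivation: the paper also replaces $q$ by $q^4$ in \myref{bilggg} and takes $z=q^3,\,q^{-3}$ in \myref{bilggg1} for \myref{bilggb}, \myref{bilggc} and $z=q,\,q^{-1}$ in \myref{bilggg2} for \myref{bilgga}, \myref{bilggd}, with one theta term vanishing via $(1;q^{32})_\infty=0$ and the remaining products rearranged exactly as you describe. The only difference is that you spell out the routine Pochhammer bookkeeping (including the $q^2/(1-q)$ and $1/(1-q^3)$ factors for \myref{bilgga} and \myref{bilggb}), which the paper leaves implicit.
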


To deduce the bilateral identities in Corollary~\ref{cor:bilgg},
first replace $q$ by $q^4$ in \myref{bilggg} and then observe that
the respective $z=q^3$ and  $z=q^{-3}$ cases of \myref{bilggg1}
give \myref{bilggb} and \myref{bilggc}, whereas
the respective $z=q$ and $z=q^{-1}$ cases of \myref{bilggg2}
give \myref{bilgga} and \myref{bilggd}.

Notice that Equations \myref{bilgga} and \myref{bilggb}
can be obtained from each other by replacing $q$ by $-q$.
The same relation also holds for Equations \myref{bilggc} and \myref{bilggd}.

We would like to stress that the bilateral summations in
Corollaries~\ref{cor:bilrr} and \ref{cor:bilgg},
which we believe to be new (and also \emph{beautiful}, in line with
Hardy's quote about \myref{rr} stated in the introduction),
are \textit{not} special cases of the following 
bilateral extension of the Lebesgue identity
\begin{equation}\label{billeb}
\sum_{k=-\infty}^\infty\frac{(a;q)_k}{(bq;q)_k}q^{\binom{k+1}2}b^k=
\frac{(q^2,abq,q/ab,bq^2/a;q^2)_\infty}{(bq,q/a;q)_\infty}
\end{equation}
(which can be obtained from \cite[Appendix~(II.30), $c\to\infty$ followed by
$(a,b)\mapsto (ab,a)$]{GR}).

A noteworthy special case of \myref{billeb} due to G\"ollnitz~\cite{Goe},
which should be compared to the G\"ollnitz--Gordon identities in \myref{gg},
is obtained by letting
$(a,b,q)\mapsto (-q,1,q^2)$:
\begin{equation}
\sum_{k=0}^\infty\frac{(-q;q^2)_k}{(q^2;q^2)_k}q^{k(k+1)}=
\frac 1{(q^2,q^3,q^7;q^8)_\infty}.
\end{equation}
Another noteworthy special case of \myref{billeb}
is obtained by letting $(a,b)\mapsto(-q,1)$:
\begin{equation}
\sum_{k=0}^\infty\frac{(-q;q)_k}{(q;q)_k}q^{\binom{k+1}2}=
\frac {(q^4;q^4)_\infty}{(q;q)_\infty}.
\end{equation}
which is identity (8) in Slater's list.

Other bilateral identities of the Rogers--Ramanujan type
are collected in the following theorem:

\begin{Theorem}\label{thm:bilcc}
We have the following four bilateral summations:
\begin{subequations}\label{bilcc}
\begin{align}
\label{bilccb}
\sum_{k=-\infty}^\infty\frac{(-z;q)_k}{(z^2q;q^2)_k}q^{\binom{k}2}z^k
={}&\frac{(-z;q)_\infty(q;q^2)_\infty}{(z^2;q)_\infty(q^2/z^2;q^2)_\infty}
(q^3,z^3,z^{-3}q^3;q^3)_\infty,\\
\label{bilcca}
\sum_{k=-\infty}^\infty\frac{(-z;q^2)_k}{(zq;q)_{2k}}
q^{k(k+1)}z^k
={}&\frac{(q/z;q)_\infty(-zq^2;q^2)_\infty}
{(z^2q^2,q^2/z^2,q;q^2)_\infty}(q^6,-z^3q^3,-z^{-3}q^3;q^6)_\infty,\\\notag
\label{bilcca2}
\sum_{k=-\infty}^\infty\frac{(-z;q^2)_k}{(z;q)_{2k}}
q^{k(k-1)}z^k
={}&\frac{(q/z;q)_\infty(-z;q^2)_\infty}
{(z^2,q^2/z^2,q;q^2)_\infty}(q^6;q^6)_\infty\\*
&\times\left[(-z^3q,-z^{-3}q^5;q^6)_\infty+
z^2(-z^3q^5,-z^{-3}q;q^6)_\infty
\right],\\\notag
\label{biljac}
\sum_{k=-\infty}^\infty\frac{q^{2k^2}}{(z;q)_{2k+1}}z^{2k}
={}&\frac{(q/z;q)_\infty}
{(z^2,q^2/z^2,q;q^2)_\infty}(q^8;q^8)_\infty\\*
&\times\left[(-z^4q^3,-z^{-4}q^5;q^8)_\infty+
z(-z^4q^5,-z^{-4}q^3;q^8)_\infty
\right],
\end{align}
\end{subequations}
for complex $z$ such that the series on the left-hand sides
have no poles.
\end{Theorem}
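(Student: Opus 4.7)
The plan is to mirror the Watson-type strategy that the paper uses for Theorems \ref{thm5} and \ref{thm8}. Specifically, I would begin with a bilateral very-well-poised transformation of Bailey (for example, the bilateral ${}_6\psi_6$ summation or a suitable confluent limit of his ${}_8\psi_8$ transformation), and specialize its parameters so that, after an appropriate confluent limit, the bilateral series on one side reproduces exactly the summand appearing in each of the four identities \myref{bilccb}--\myref{biljac}. The other side of Bailey's transformation, after the limit, will be an explicit combination of infinite $q$-products of theta type, and Jacobi's triple product identity then converts this combination into the closed-form right-hand sides.

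For \myref{bilccb} and \myref{bilcca}, whose right-hand sides consist of a single theta product (modulo $3$ and modulo $6$, respectively, up to a quadratic prefactor), the limiting procedure should produce a single theta series to which one Jacobi-triple-product application suffices. For \myref{bilcca2} and \myref{biljac}, the right-hand sides are bracketed sums of two triple products (modulo $6$ and modulo $8$); here I expect the limit of Bailey's transformation to yield not a single theta series but a bilateral expression whose summation index splits naturally according to parity into two pieces, each of which one recognises as a Jacobi triple product. This parity decomposition is what produces the two summands in the brackets and explains why the right-hand sides in these two cases do not collapse into a single infinite product.

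The main obstacle I anticipate is locating the precise parameter specialization of Bailey's transformation for each of the four identities. The summand shapes—for instance $(-z;q)_k/(z^2q;q^2)_k\,q^{\binom{k}{2}}z^k$ in \myref{bilccb}, and the mixed-base factors $(-z;q^2)_k/(zq;q)_{2k}$ and $(-z;q^2)_k/(z;q)_{2k}$ in \myref{bilcca} and \myref{bilcca2}, and the pure even-index factor $q^{2k^2}/(z;q)_{2k+1}$ in \myref{biljac}—constrain the free parameters rather tightly, and a different collection of limits (some parameters sent to $0$, others to $\infty$) will in general be needed for each identity. Care must also be taken at each limit that the bilateral sum continues to converge, that the limit commutes with summation, and that the right-hand side of Bailey's transformation really does collapse to (a sum of) theta products of the correct modulus. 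Once the right specialization is pinned down, the remaining manipulations—simplifying products of $q$-shifted factorials and reindexing Jacobi triple products—should be routine.
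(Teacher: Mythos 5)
Your overall strategy is the right one and is in fact the paper's: specialize a Bailey-type bilateral transformation, take confluent limits, and finish with Jacobi's triple product identity \myref{jtpi}. Concretely, the paper works not from the ${}_6\psi_6$ summation but from Bailey's ${}_2\psi_2\to{}_6\psi_8$ transformation \myref{bil68}, in the confluent form \myref{bil58} (a ${}_1\psi_2$ transformed into a very-well-poised ${}_5\psi_8$), and the entire content of the proof of Theorem~\ref{thm:bilcc} is the list of explicit specializations: $(a,b,c,z)\mapsto(-z,zq^{-1/2},-zq^{-1/2},-z)$ for \myref{bilccb}; $q\mapsto q^2$ with $(a,b,c,z)\mapsto(-z,z,zq^{-1},-z)$ for \myref{bilcca}; $q\mapsto q^2$ with $(a,b,c,z)\mapsto(-z,zq^{-1},zq^{-2},-zq^{-2})$ for \myref{bilcca2}; and $q\mapsto q^2$, $(b,c,z)\mapsto(z,zq^{-1},z^2/a)$ with $a\to\infty$ and division by $(1-z)$ for \myref{biljac}. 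Your proposal stops exactly at this point -- you acknowledge that ``locating the precise parameter specialization'' is the main obstacle -- so what you have is a correct plan rather than a proof: the step that does all the work is left open, and without it nothing has been verified.

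A second, smaller issue is your anticipated mechanism for the two-term right-hand sides of \myref{bilcca2} and \myref{biljac}. These do not arise from splitting the summation index by parity (a parity split of a theta series would double the modulus and produce products of the wrong modulus). Rather, after specialization the very-well-poised side of \myref{bil58} retains the linear factor coming from $(1-azq^{2k})/(1-az)$ -- compare the factors $(1+zq^k)$ and $(1-z^2q^{1+2k})$ in the proof of Theorem~\ref{thm5} -- and it is the expansion of this factor into two terms that splits the single bilateral sum into two theta series, each summed by \myref{jtpi}; that is why the paper applies the triple product identity twice in those two cases (and only once for \myref{bilccb} and \myref{bilcca}, where the specialization makes this factor cancel or trivialize). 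So the skeleton of your approach coincides with the paper's, but both the decisive substitutions and the correct splitting mechanism are missing from the write-up.
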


The case $q\mapsto q^2$, followed by $z\to q$, of \myref{bilccb}
reduces to identity (25) in Slater's list, which can be stated as
\begin{equation}\label{s3}
\sum_{k=0}^\infty\frac{(-q;q^2)_k}{(q^4;q^4)_k}q^{k^2}=
\frac{(q^2;q^2)_\infty(q^3;q^3)_\infty^2}
{(q;q)_\infty(q^4;q^4)_\infty(q^6;q^6)_\infty}=
\frac{(q^3;q^3)_\infty}{(q^4;q^4)_\infty(q,q^5;q^6)_\infty}.
\end{equation}
The $z\to 1$ case of \myref{bilcca}
reduces to identity (48) in Slater's list, which is
\begin{equation}\label{s1}
\sum_{k=0}^\infty\frac{(-1;q^2)_k}{(q;q)_{2k}}q^{k(k+1)}=
\frac{(q^4;q^4)_\infty(q^6;q^6)_\infty^5}
{(q^2;q^2)_\infty^2(q^3;q^3)_\infty^2(q^{12};q^{12})_\infty^2}.
\end{equation}
The $z\to q$ cases of \myref{bilcca} and  \myref{bilcca2}
reduce to identities (50) and (29) in Slater's list, namely
\begin{subequations}
\begin{align}\label{s1b}
\sum_{k=0}^\infty\frac{(-q;q^2)_k}{(q;q)_{2k+1}}q^{k(k+2)}&=
\frac{(q^2;q^2)_\infty(q^{12};q^{12})_\infty^2}
{(q;q)_\infty(q^4;q^4)_\infty(q^6;q^6)_\infty},\\*
\intertext{and}\label{s12}
\sum_{k=0}^\infty\frac{(-q;q^2)_k}{(q;q)_{2k}}q^{k^2}&=
\frac{(q^6;q^6)_\infty^2}
{(q;q)_\infty(q^{12};q^{12})_\infty},
\end{align}
\end{subequations}
respectively. The $z\to q^2$ case of \myref{bilcca2}
reduces to identity (28) in Slater's list, i.e.,
\begin{equation}\label{s12b}
\sum_{k=0}^\infty\frac{(-q^2;q^2)_k}{(q;q)_{2k+1}}q^{k(k+1)}=
\frac{(q^3;q^3)_\infty(q^{12};q^{12})_\infty}
{(q;q)_\infty(q^6;q^6)_\infty}.
\end{equation}
Multiplication of both sides of \myref{biljac} by $(1-z)$ and letting
$z\to 1$ reduces to a sum by F.H.~Jackson~\cite{Ja},
also given by Slater as identity (39), which is
\begin{equation}\label{jack1}
\sum_{k\ge 0}\frac{q^{2k^2}}{(q;q)_{2k}}=
\frac1{(q^3,q^4,q^5;q^8)_\infty(q^2,q^{14};q^{16})_\infty}.
\end{equation}
The $z\to q$ case of \myref{biljac} reduces to identity (38)
in Slater's list, namely
\begin{equation}\label{jack2}
\sum_{k\ge 0}\frac{q^{2k(k+1)}}{(q;q)_{2k+1}}=
\frac1{(q,q^4,q^7;q^8)_\infty(q^6,q^{10};q^{16})_\infty}.
\end{equation}
The $z\to -1$ cases of \myref{bilcca} and \myref{bilcca2}
reduce, after replacing the summation index $k$ by $-k$, to the identities
\begin{subequations}\label{s2s}
\begin{align}\label{s2}
\sum_{k\ge 0}\frac{(-1;q)_{2k}}{(q^2;q^2)_k}q^k&=
\frac{(q^2;q^2)_\infty(q^3;q^3)_\infty^2}
{(q;q)_\infty^2(q^6;q^6)_\infty}
=\frac1{(q,q^2;q^3)_\infty(q,q^5;q^6)_\infty},\\*\label{s22}
\sum_{k\ge 0}\frac{(-q;q)_{2k}}{(q^2;q^2)_k}q^k&=
\frac{(q^6;q^6)_\infty^2}
{(q;q)_\infty(q^3;q^3)_\infty}
=\frac1{(q,q^2;q^3)_\infty(q^3,q^3;q^6)_\infty}.
\end{align}
\end{subequations}
Equation \myref{s2} is given by Slater as identity (24), while
\myref{s22} is due to Ismail and Stanton~\cite[Thm.~7]{IS}.
It is not difficult to transform the ${}_2\phi_1$ series
(with vanishing lower parameter)
on the left-hand sides of Equations \myref{s2s} by suitable instances of
the $q$-Pfaff transformation \cite[Appendix~(III.4)]{GR} to
${}_1\phi_1$ series, by which \myref{s2}
is seen to be equivalent to the $q\mapsto-q$ case of \myref{s1}
and also to \myref{s3}, while \myref{s22} is then seen to be equivalent to
an identity by Ramanujan (cf.\ \cite[p.~87, Entry 4.2.11]{AB})
and also to the $q\mapsto-q$ case of \myref{s12b}.

As consequence of Equation \myref{bilcca2}, we obtain the following two
bilateral summations:

\begin{Corollary}[Bilateral modulus $6$ identities]\label{cor:bilcs}
\begin{subequations}\label{bilcs}
\begin{align}\label{bilcs1}
\sum_{k=-\infty}^\infty\frac{(q^5;q^6)_{k}}{(-q^2;q^3)_{2k+1}}(-1)^k
q^{k(3k+2)}&=
\frac{(q^5,q^6;q^6)_\infty}{(q;q^3)_\infty(q^3,q^4;q^6)_\infty},\\*
\label{bilcs2}
\sum_{k=-\infty}^\infty\frac{(q;q^6)_{k}}{(-q;q^3)_{2k}}(-1)^k
q^{k(3k-2)}&=
\frac{(q,q^6;q^6)_\infty}{(q^2;q^3)_\infty(q^2,q^3;q^6)_\infty}.
\end{align}
\end{subequations}
\end{Corollary}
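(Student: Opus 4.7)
The plan is to obtain both identities as specialisations of Equation \myref{bilcca2}, the third identity of Theorem~\ref{thm:bilcc}, by the substitutions $q\mapsto q^3$ and a suitable choice of $z$. In \myref{bilcca2} the exponent of $q$ under the sum is $k(k-1)$ and the $z^k$ provides the remaining linear factor in $k$. After $q\mapsto q^3$ the quadratic part becomes $q^{3k^2-3k}$; matching $q^{k(3k+2)}$ in \myref{bilcs1} forces $z=-q^5$, while matching $q^{k(3k-2)}$ in \myref{bilcs2} forces $z=-q$. With $q\mapsto q^3$, the factor $(-z;q^2)_k$ becomes $(-z;q^6)_k$, which under $z=-q^5$ is $(q^5;q^6)_k$ and under $z=-q$ is $(q;q^6)_k$, in agreement with the numerators of the sums in Corollary~\ref{cor:bilcs}. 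The denominator $(z;q)_{2k}$ becomes $(z;q^3)_{2k}$, giving $(-q^5;q^3)_{2k}$ and $(-q;q^3)_{2k}$ respectively; in the first case one uses the simple identity $(-q^2;q^3)_{2k+1}=(1+q^2)(-q^5;q^3)_{2k}$ to convert to the form appearing in \myref{bilcs1}, with the resulting factor $(1+q^2)$ pulled outside the sum.

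The second step is to evaluate the right-hand side of \myref{bilcca2} under the two substitutions. The \emph{crucial structural observation} here is that in both cases exactly one of the two bracketed terms vanishes because it contains the factor $(1;q^{18})_\infty=0$. Indeed, under $q\mapsto q^3,\;z=-q^5$ one finds $-z^{-3}q^5\cdot q^{10}=q^{-15+15}=1$ in the first bracket, killing it, while under $q\mapsto q^3,\;z=-q$ one finds $-z^{-3}q^3=1$ in the second bracket, killing it. This collapse is essential: it is what reduces the two-term sum-of-products on the right of \myref{bilcca2} into a single closed product, which is what is needed to match the right-hand sides of \myref{bilcs1}--\myref{bilcs2}.

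The third step is a bookkeeping simplification of the remaining product. The non-integral powers such as $(-q^{-2};q^3)_\infty$, $(q^{-4};q^6)_\infty$, $(q^{-12};q^{18})_\infty$, $(q^{10};q^6)_\infty$, $(q^{30};q^{18})_\infty$ are each brought to standard form by peeling off a single finite factor (e.g.\ $(q^{-12};q^{18})_\infty=(1-q^{-12})(q^6;q^{18})_\infty$, and $(q^{30};q^{18})_\infty=(q^{12};q^{18})_\infty/(1-q^{12})$). In the case $z=-q^5$, the accumulated monomial prefactor $q^{-2}(1+q^2)\cdot q^{10}\cdot(-q^4)\cdot(-q^{-12})$ telescopes precisely to $(1+q^2)$, exactly matching the $(1+q^2)$ extracted on the left; in the case $z=-q$ no such factor appears. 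After this one also uses $(q^6,q^{12},q^{18};q^{18})_\infty=(q^6;q^6)_\infty$ and $(-q;q^3)_\infty(q;q^3)_\infty=(q^2;q^6)_\infty$ (respectively $(-q^2;q^3)_\infty(q^2;q^3)_\infty=(q^4;q^6)_\infty$) to recover the stated right-hand sides.

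The main obstacle is purely computational: one must carefully track the cancellation of the negative powers of $q$ arising from the various factors with $z=-q^5$, and in particular verify that the global prefactor produced on the right-hand side is exactly $(1+q^2)$ so that it cancels cleanly against the factor introduced on the left when rewriting $(-q^5;q^3)_{2k}$ as $(-q^2;q^3)_{2k+1}/(1+q^2)$. No deep new idea is required beyond this matching, since Theorem~\ref{thm:bilcc} already does all the analytic work.
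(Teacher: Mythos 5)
Your proposal is correct and follows essentially the same route as the paper: specialize \myref{bilcca2} with $q\mapsto q^3$ and a suitable $z$, let one bracketed product vanish via $(1;q^{18})_\infty=0$, and tidy the remaining infinite products. The only difference is your choice $z=-q^5$ for \myref{bilcs1} instead of the paper's $z=-q^{-1}$; the two specializations are equivalent (they differ by a unit shift of the bilateral summation index), and your variant trades that shift for the clean cancellation of the factor $(1+q^2)$, which you verify correctly.
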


To deduce the bilateral identities in Corollary~\ref{cor:bilcs},
first replace $q$ by $q^3$ in \myref{bilcca2} and then observe that
the respective $z=-q^{-1}$ and  $z=-q$ cases give \myref{bilcs1} and
\myref{bilcs2}.

The identities in Corollary~\ref{cor:bilcs} become even nicer
if the summation index $k$ is replaced by $-k$:
\begin{manualCorollary}{\ref{cor:bilcs}$'$}[Bilateral modulus $6$ identities]
\label{cor:bilcsp}
\begin{subequations}\label{bilcsp}
\begin{align}\label{bilcs1p}
\sum_{k=-\infty}^\infty\frac{(-q;q^3)_{2k-1}}{(q;q^6)_{k}}q^{3k-2}
&=\frac{(q^5,q^6;q^6)_\infty}{(q;q^3)_\infty(q^3,q^4;q^6)_\infty},\\*
\label{bilcs2p}
\sum_{k=-\infty}^\infty\frac{(-q^2;q^3)_{2k}}{(q^5;q^6)_k}q^{3k}
&=\frac{(q,q^6;q^6)_\infty}{(q^2;q^3)_\infty(q^2,q^3;q^6)_\infty}.
\end{align}
\end{subequations}
\end{manualCorollary}

Further, as consequence of Equation \myref{biljac},
we obtain the following four bilateral summations:

\begin{Corollary}[Bilateral modulus $32$ identities]\label{cor:biljacs}
\begin{subequations}\label{biljacs}
\begin{align}
\label{biljacs1}
\sum_{k=-\infty}^\infty\frac{q^{2k(4k-3)}}{(q;q^8)_k(-q^5;q^8)_k}&=
\frac{(q^4,q^7;q^8)_\infty(q^{32};q^{32})_\infty}{(q^2,q^3;q^8)_\infty
(q^{14};q^{16})_\infty},\\*\label{biljacs3}
\sum_{k=-\infty}^\infty\frac{q^{2k(4k+3)}}{(q^3;q^8)_{k+1}(-q^7;q^8)_k}&=
\frac{(q^4,q^5;q^8)_\infty(q^{32};q^{32})_\infty}{(q,q^6;q^8)_\infty
(q^{10};q^{16})_\infty},\\*\label{biljacs5}
\sum_{k=-\infty}^\infty\frac{q^{2k(4k-3)}}{(q^5;q^8)_k(-q;q^8)_k}&=
\frac{(q^3,q^4;q^8)_\infty(q^{32};q^{32})_\infty}{(q^2,q^7;q^8)_\infty
(q^6;q^{16})_\infty},\\*\label{biljacs7}
\sum_{k=-\infty}^\infty\frac{q^{2k(4k+3)}}{(q^7;q^8)_{k}(-q^3;q^8)_{k+1}}&=
\frac{(q,q^4;q^8)_\infty(q^{32};q^{32})_\infty}{(q^5,q^6;q^8)_\infty
(q^2;q^{16})_\infty}.
\end{align}
\end{subequations}
\end{Corollary}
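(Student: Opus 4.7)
My plan is to deduce all four identities of Corollary~\ref{cor:biljacs} from \myref{biljac} by the substitution $q\mapsto-q^4$ (legitimate since $|-q^4|=|q|^4<1$), followed by a suitable choice of $z$. Under this substitution, $q^{2k^2}$ becomes $q^{8k^2}$, and the product $(z;-q^4)_{2k+1}=\prod_{j=0}^{2k}(1-z(-q^4)^j)$ splits by parity of $j$ as $(z;q^8)_{k+1}(-zq^4;q^8)_k$ (the even indices $j=2i$ with $i=0,\dots,k$ contribute the first factor, while the odd indices $j=2i+1$ with $i=0,\dots,k-1$ contribute the second).

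For \myref{biljacs3} and \myref{biljacs7} I take $z=q^3$ and $z=-q^3$ respectively; in both cases $z^{2k}=q^{6k}$, so the summand becomes $q^{8k^2+6k}=q^{2k(4k+3)}$, and the factorization above yields exactly the denominators $(q^3;q^8)_{k+1}(-q^7;q^8)_k$ and $(-q^3;q^8)_{k+1}(q^7;q^8)_k$. For \myref{biljacs5} and \myref{biljacs1} I take $z=q^{-3}$ and $z=-q^{-3}$, so that $z^{2k}=q^{-6k}$ and $q^{8k^2}z^{2k}=q^{2k(4k-3)}$; here the leading factor of the denominator is rewritten via the elementary identity $(z;q^8)_{k+1}=(1-z)(zq^8;q^8)_k$, which gives $(q^{-3};q^8)_{k+1}=(1-q^{-3})(q^5;q^8)_k$ and $(-q^{-3};q^8)_{k+1}=(1+q^{-3})(-q^5;q^8)_k$, and the scalar prefactor $(1\mp q^{-3})$ is then cleared from both sides.

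On the right-hand side I compute $(-q^4/z;-q^4)_\infty$ by splitting again by parity, and I evaluate the two infinite $q$-shifted factorials inside the bracket of \myref{biljac}. The decisive simplification is that, in each of the four substitutions, exactly one of the two arguments $-z^4(-q^4)^3$ or $-z^{-4}(-q^4)^3$ evaluates to $1$, so $(1;q^{32})_\infty=0$ annihilates the corresponding bracket term and only a single clean $q$-product survives. What then remains are routine manipulations of infinite products, essentially $(a;q)_\infty(-a;q)_\infty=(a^2;q^2)_\infty$ to pass between the bases $q^8$ and $q^{16}$, together with the collapse $(q^{32};q^{32})_\infty(q^8,q^{24};q^{32})_\infty=(q^8;q^{16})_\infty(q^{32};q^{32})_\infty$. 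The main obstacle I anticipate is accurate bookkeeping of signs and of the auxiliary prefactors $(1\mp q^{-3})$ and $(1\mp q^{-6})$ (the latter arising from $(z^2;q^8)_\infty=(1-q^{-6})(q^2;q^8)_\infty$) in the cases $z=\pm q^{-3}$; but these factors combine precisely to produce the stated right-hand sides, with no further analytic input required.
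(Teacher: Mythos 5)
Your proposal is correct and takes essentially the same route as the paper: replace $q$ by $-q^4$ in \myref{biljac} and specialize $z$ to $\pm q^{\pm3}$, with exactly one bracket term annihilated by $(1;q^{32})_\infty=0$ and the remaining products rearranged. The only (inessential) difference is that the paper obtains \myref{biljacs1} and \myref{biljacs7} by replacing $q$ by $-q$ in \myref{biljacs5} and \myref{biljacs3} rather than by the direct choices $z=-q^{\mp3}$, which is trivially equivalent to what you do.
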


To deduce the bilateral identities in Corollary~\ref{cor:biljacs},
first replace $q$ by $-q^4$ in \myref{biljac} and then observe that
the respective $z=q^3$ and  $z=q^{-3}$ cases give
\myref{biljacs3} and \myref{biljacs5}. The identities in
\myref{biljacs1} and \myref{biljacs7} follow by replacing
$q$ by $-q$ in \myref{biljacs5} and \myref{biljacs3}, respectively.

\section{Derivations of the main results
in the single series case}\label{sec:bilRR}

A rich source of material on basic hypergeometric series
is Gasper and Rahman's classic textbook \cite{GR}.
In particular, we refer to that book
for standard notions (such as that of a bilateral basic hypergeometric 
${}_r\psi_s$ series), and to Appendix I of that book for the
elementary manipulations of $q$-shifted factorials, which we employ
without explicit mention.

An identity which we make crucial use of is Jacobi's triple product identity
(cf.\ \cite[(II.28)]{GR})
\begin{equation}\label{jtpi}
\sum_{k=-\infty}^\infty q^{\binom k2}(-z)^k=(q,z,q/z;q)_\infty.
\end{equation}


Our starting point for deriving bilateral summations of the
Rogers--Ramanujan type
is the following transformation of a general
bilateral $_2\psi_2$ series into a multiple of a very-well-poised
$_6\psi_8$ series due to Bailey~\cite[(3.2)]{Ba} (see also
\cite[Exercise~5.11, second identity]{GR}).
\begin{align}\label{bil68}\notag
\sum_{k=-\infty}^\infty\frac{(e,f;q)_k}{(aq/c,aq/d;q)_k}
\left(\frac{aq}{ef}\right)^k
=\frac{(q/c,q/d,aq/e,aq/f;q)_\infty}
{(aq,q/a,aq/cd,aq/ef;q)_\infty}&\\*\times
\sum_{k=-\infty}^\infty\frac{(1-aq^{2k})(c,d,e,f;q)_k}
{(1-a)(aq/c,aq/d,aq/e,aq/f;q)_k}q^{k^2}\left(\frac{a^3q}{cdef}\right)^k&,
\end{align}
valid for $|aq/cd|<1$ and $|aq/ef|<1$. Bailey obtained this
transformation by bilateralizing Watson's transformation
(cf.\ \cite[(III.18)]{GR}) using the same method (replacing
$n$ by $2n$, shifting the summation index $k\mapsto k+n$, suitably
shifting parameters and taking the limit $n\to\infty$),
applied by Cauchy~\cite{Cau} in his second proof of
Jacobi's triple product identity.


In \myref{bil68} we now let $f\to\infty$ and
perform the simultaneous
substitutions $(a,c,d,e)\mapsto(az,az/b,az/c,a)$.
This yields the following transformation of a general $_1\psi_2$ series
into a multiple of a very-well-poised $_5\psi_8$ series.
\begin{align}\notag
\sum_{k=-\infty}^\infty\frac{(a;q)_k}{(bq,cq;q)_k}q^{\binom{k+1}2}(-z)^k
=\frac{(bq/az,cq/az,zq;q)_\infty}
{(azq,q/az,bcq/az;q)_\infty}&\\*\times
\sum_{k=-\infty}^\infty\frac{(1-azq^{2k})(az/b,az/c,a;q)_k}
{(1-az)(bq,cq,zq;q)_k}q^{3\binom k2}\left(-bczq^2\right)^k&,
\label{bil58}
\end{align}
valid for $|bcq/az|<1$.

Theorems~\ref{thm5}, \ref{thm8} and \ref{thm:bilcc}
all appear by combining special instances of
\myref{bil58} with \myref{jtpi}.

\begin{proof}[Proof of Theorem~\ref{thm5}]
In \myref{bil58}, we first let $c\to 0$, perform the
substitutions $(b,z)\mapsto (z,bz/a)$ and let $a\to\infty$.
We obtain
\begin{equation}\label{c0}
\sum_{k=-\infty}^\infty\frac{q^{k^2}\,(bz)^k}
{(zq;q)_k}
=\frac{(q/b;q)_\infty}
{(bzq,q/bz;q)_\infty}
\sum_{k=-\infty}^\infty\frac{(1-bzq^{2k})}{(1-bz)}\frac{(b;q)_k}
{(zq;q)_k}q^{5\binom k2}\left(-b^2z^3q^2\right)^k.
\end{equation}
The $b=z$ case of \myref{c0} reduces to
\begin{equation*}
\sum_{k=-\infty}^\infty\frac{q^{k^2}z^{2k}}
{(zq;q)_k}
=\frac{(1/z;q)_\infty\,z^{-1}}
{(z^2q,1/z^2;q)_\infty}
\sum_{k=-\infty}^\infty(1+zq^{k})
q^{5\binom k2}\left(-z^5q^2\right)^k,
\end{equation*}
which after two applications of \myref{jtpi} yields \myref{bilrrg1}.
Similarly, the $b=zq$ case of \myref{c0} reduces to
\begin{equation*}
\sum_{k=-\infty}^\infty\frac{q^{k(k+1)}z^{2k}}
{(zq;q)_k}
=\frac{(1/z;q)_\infty}
{(z^2q,1/z^2;q)_\infty}
\sum_{k=-\infty}^\infty(1-z^2q^{1+2k})
q^{5\binom k2}\left(-z^5q^4\right)^k,
\end{equation*}
which after two applications of \myref{jtpi} yields \myref{bilrrg2}.
\end{proof}

In the remaining proofs we only sketch the most relevant steps.
\begin{proof}[Proof of Theorem~\ref{thm8}]
In \myref{bil58}, we first let $c\to 0$,
replace $q$ by $q^2$ and set $(a,b,z)\mapsto(-zq,z,-zq^{-1})$.
The result, after two applications of  \myref{jtpi},
is \myref{bilggg1}. Now \myref{bilggg2} can readily be obtained from
\myref{bilggg1} by replacing $z$ by $-1/zq$ and reversing the sum.
\end{proof}

\begin{proof}[Proof of Theorem~\ref{thm:bilcc}]
The identity \myref{bilccb} follows from \myref{bil58} by making
the substitution $(a,b,c,z)\mapsto(-z,zq^{-1/2},-zq^{-1/2},-z)$,
and applying \myref{jtpi}.
The identity \myref{bilcca} follows from \myref{bil58}
by replacing $q$ by $q^2$, setting
$(a,b,c,z)\mapsto(-z,z,zq^{-1},-z)$, and applying \myref{jtpi}.
The identity \myref{bilcca2} follows from \myref{bil58}
by replacing $q$ by $q^2$, setting
$(a,b,c,z)\mapsto(-z,zq^{-1},zq^{-2},-zq^{-2})$,
and applying \myref{jtpi} twice.
The identity \myref{biljac} follows from \myref{bil58}
by replacing $q$ by $q^2$, setting
$(b,c,z)\mapsto(z,zq^{-1},z^2/a)$ followed by taking $a\to\infty$,
applying \myref{jtpi} twice and dividing both sides by $(1-z)$.
\end{proof}

\section{Multisum extensions}
\label{sec:multRR}

Here
we derive multisum extensions of the results from Section~\ref{sec:thms}.
Throughout we assume $r\ge 2$. We write $k=(k_1,\dots,k_{r-1})$ 
and define $k_r:=0$. Further, we define
\begin{equation*}
\Lambda^{(r-1)}:=\{k\in\mathbb Z^{r-1}\mid
\infty>k_1\ge\dots\ge k_{r-1}>-\infty\}
\end{equation*}
in order to compactly specify the range of our multilateral summations.

Our multisum extension of Theorem~\ref{thm5} is a
multilateral extension of the Andrews--Gordon identities \cite{An74},
which, for integers
$r$ and $i$ with $r\ge 2$ and $1\le i\le r$,
can be written as
\begin{equation}\label{rrmult}
\sum_{k\in\Lambda^{(r-1)}}
\frac{q^{\sum_{j=1}^{r-1} k_j^2+\sum_{j=i}^{r-1} k_j}}
{(q;q)_{k_{r-1}}\prod_{j=1}^{r-2}(q;q)_{k_j-k_{j+1}}}
=\frac {(q^i,q^{2r+1-i},q^{2r+1};q^{2r+1})_\infty}{(q;q)_\infty}.
\end{equation}
An elementary, inductive proof of \myref{rrmult} was given by
Andrews in \cite{An84}.

Our multisum extension of Theorem~\ref{thm8}
is a multilateral extension of Andrews generalized
G\"ollnitz--Gordon identity \cite{An75,An} (the $i=r$ case
of identity \myref{ggmult} below), which has been extended
to a family of identities for integers $r$ and $i$ with
$r\ge 2$ and $1\le i\le r$, by Bressoud~\cite{Br2}.
It can be written as
\begin{equation}\label{ggmult}
\sum_{k\in\Lambda^{(r-1)}}
\frac{(-q^{1-k_1};q^2)_{k_1}
q^{2\sum_{j=1}^{r-1} k_j^2+2\sum_{j=i}^{r-1} k_j}}
{(q^2;q^2)_{k_{r-1}}\prod_{j=1}^{r-2}(q^2;q^2)_{k_j-k_{j+1}}}
=\frac {(q^2;q^4)_\infty
(q^{2i-1},q^{4r+1-2i},q^{4r};q^{4r})_\infty}
{(q;q)_\infty}.
\end{equation}
The identities in \myref{rrmult} and \myref{ggmult}
reduce to \myref{rr} and \myref{gg}, respectively, for $r=2$.

In \cite{Br}, Bressoud also gave an even modulus analogue of the
Andrews--Gordon identities in \myref{rrmult}, namely
\begin{equation}\label{rr2mult}
\sum_{k\in\Lambda^{(r-1)}}
\frac{q^{\sum_{j=1}^{r-1} k_j^2+\sum_{j=i}^{r-1} k_j}}
{(q^2;q^2)_{k_{r-1}}\prod_{j=1}^{r-2}(q;q)_{k_j-k_{j+1}}}
=\frac {(q^i,q^{2r-i},q^{2r};q^{2r})_\infty}{(q;q)_\infty},
\end{equation}
where $1\le i\le r$.
The $(r,i)=(2,1)$ and $(r,i)=(2,2)$ cases of \myref{rr2mult} are
special cases of the $q$-binomial theorem.

Our multilateral summations in this section are obtained by a procedure
completely analogous to the single series case. Our starting point is the
following multisum transformation which follows from a result by
Agarwal, Andrews and
Bressoud~\cite[Theorem~3.1 with Equations~(4.1) and (4.2)]{AAB}:
\begin{Proposition}
Let $r$ and $i$ be integers with $r\ge 2$ and $1\le i\le r$.
Further, let $n$ be a nonnegative integer.
Then, with $k_0:=n$ and $k_r:=0$,
we have the following series transformation:
\begin{align}\label{aab}
\sum_{n\ge k_1\ge\cdots\ge k_{r-1}\ge 0}\Bigg(&
\prod_{j=1}^{r}\frac{(b_j,c_j;q)_{k_j}}{(q;q)_{k_{j-1}-k_j}}
\prod_{j=1}^{i-1}\frac{(a/b_jc_j;q)_{k_{j-1}-k_j}}
{(a/b_j,a/c_j;q)_{k_{j-1}}}\notag\\*&\times
\prod_{j=i}^{r}\frac{(aq/b_jc_j;q)_{k_{j-1}-k_j}}{(aq/b_j,aq/c_j;q)_{k_{j-1}}}
\prod_{j=1}^{r-1}\left(\frac a{b_jc_j}\right)^{k_j}\cdot
q^{\sum_{j=i}^{r-1}k_i}\Bigg)\notag\\
=\sum_{k=0}^n\Bigg(&\frac{(a;q)_k(-1)^k
q^{\binom k2+(r+1-i)k}}{(q;q)_k(q;q)_{n-k}(a;q)_{n+k}}
\frac{a^{rk}}{\prod_{j=1}^{r}(b_jc_j)^k}\notag\\*
&\times\prod_{j=1}^{i-1}\frac{(b_j,c_j;q)_k}{(a/b_j,a/c_j;q)_k}
\prod_{j=i}^{r}\frac{(b_j,c_j;q)_k}{(aq/b_j,aq/c_j;q)_k}\notag\\*
&\times\left[1
+\frac{(1-q^k)a q^{k-1}}{(1-aq^{k-1})}\prod_{j=i}^{r}
\frac{b_jc_j\,(1-aq^k/b_j)(1-aq^k/c_j)}{aq\,(1-b_jq^{k-1})(1-c_jq^{k-1})}
\right]\Bigg).
\end{align}
\end{Proposition}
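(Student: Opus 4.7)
The plan is to derive the stated transformation as an immediate specialization of Theorem~3.1 of Agarwal, Andrews and Bressoud~\cite{AAB}, exactly as the text of the excerpt signals. Accordingly, the work is largely a matter of bookkeeping and notational alignment with the AAB reference rather than a new series manipulation: one takes their master transformation and performs the substitutions dictated by their Equations~(4.1) and~(4.2).

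First I would recall AAB's Theorem~3.1, which expresses an $(r-1)$-fold multisum indexed by a chain $n=k_0\ge k_1\ge\cdots\ge k_{r-1}\ge k_r=0$ as a single one-dimensional sum over $k\in\{0,\dots,n\}$, depending on a base $a$ and $2r$ free parameters $b_j,c_j$. I would then apply the specializations of their Equations~(4.1) and~(4.2), which prescribe the ``split at level $i$'' that produces the two product ranges $j<i$ and $j\ge i$ in the proposition: for $j<i$ the denominator factors read $(a/b_j,a/c_j;q)_{k_{j-1}}$, whereas for $j\ge i$ they become $(aq/b_j,aq/c_j;q)_{k_{j-1}}$, and the extra exponent $q^{\sum_{j=i}^{r-1}k_j}$ arises from a corresponding shift in the AAB chain. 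The products $(b_j,c_j;q)_{k_j}$ and $(q;q)_{k_{j-1}-k_j}$ on the left as well as the factor $\prod_{j=1}^{r-1}(a/b_jc_j)^{k_j}$ should read off directly from their formulation once the relabelling is carried out.

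Second, I would verify that the one-dimensional $k$-sum on the right matches. The bracketed factor $[\,1+\cdots\,]$ is the tell-tale signature of an ``unshifted plus shifted'' combination induced by the split at position $i$: the $1$ is the summand coming straight from AAB's right-hand side, and the product over $j\ge i$ of $b_jc_j(1-aq^k/b_j)(1-aq^k/c_j)/(aq(1-b_jq^{k-1})(1-c_jq^{k-1}))$ records the ratio of consecutive terms that arises when one of the chain variables is shifted by one. Collecting the prefactors $q^{\binom k2+(r+1-i)k}$, $a^{rk}$ and $\prod_j(b_jc_j)^{-k}$ is then a matter of tallying contributions from the very-well-poised normalization and the $(a/b_jc_j)^{k_j}$ ratios on the multisum side.

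The main obstacle is not mathematical depth but clerical: AAB's paper uses a slightly different chain-indexing convention and a different normalization of the very-well-poised prefactor, so the delicate step is consistently matching $q$-shifted factorials of the form $(q;q)_{k_{j-1}-k_j}$ and the boundary values $k_0=n$, $k_r=0$ on both sides. As a sanity check before declaring the derivation complete I would specialize to $r=2$, where the identity should collapse to a Watson-type transformation; agreement in that case gives confidence that every exponent, every split, and every shift of parameters has been correctly transcribed.
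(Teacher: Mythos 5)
Your proposal takes essentially the same route as the paper, which gives no independent argument for the Proposition and simply states that it is immediately obtained from Agarwal--Andrews--Bressoud's Theorem~3.1 together with their Equations~(4.1) and~(4.2); your plan of transcribing that master transformation and verifying the bookkeeping (including the $r=2$ Watson-type check) is exactly what is intended. The only small calibration: the split at level $i$ and the two-term bracket are built into AAB's Theorem~3.1 itself (the Bailey-lattice step $a\mapsto a/q$), while their Equations~(4.1)--(4.2) supply the concrete Bailey pair in the parameters $b_r,c_r$ that is inserted to produce the $j=r$ factors on both sides.
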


This is (even in the $i=r$ case) different from the multivariate
Watson transformation due to Andrews~\cite[Thm.~4]{An75}
(which, if used as a starting point instead,
would only serve to prove the extremal $i=1$ and $i=r$ cases
of the multisum identities we are after).

By multilateralization, we now deduce the following
transformation of multisums.
\begin{Corollary}\label{cor68}
Assuming $k_0:=\infty$, we have for
$r\ge 2$ and $1\le i\le r$ the following transformation:
\begin{align}\label{mult68}\notag
\sum_{\kL}\Bigg(&
\frac{\prod_{j=1}^{r-1}(b_j,c_j;q)_{k_j}}{\prod_{j=1}^{r-2}(q;q)_{k_j-k_{j+1}}}
\prod_{j=1}^{i-1}\frac{(a/b_jc_j;q)_{k_{j-1}-k_j}}
{(a/b_j,a/c_j;q)_{k_{j-1}}}\\*&\times
\frac{\prod_{j=i}^r(aq/b_jc_j;q)_{k_{j-1}-k_j}}
{\prod_{j=i}^{r}(aq/b_j,aq/c_j;q)_{k_{j-1}}}
\prod_{j=1}^{r-1}
\left(\frac{a}{b_jc_j}\right)^{k_j}\cdot q^{\sum_{j=i}^{r-1}k_j}\Bigg)\notag\\
={}&\frac{(q/b_r,q/c_r;q)_\infty}
{(a,q/a,aq/b_rc_r;q)_\infty}\notag\\*\times
\sum_{k=-\infty}^\infty \Bigg(&q^{k^2+(r-i)k}\frac{a^{(r+1)k}}
{\prod_{j=1}^{r}(b_jc_j)^k}
\prod_{j=1}^{i-1}\frac{(b_j,c_j;q)_k}
{(a/b_j,a/c_j;q)_k}\prod_{j=i}^{r}\frac{(b_j,c_j;q)_k}
{(aq/b_j,aq/c_j;q)_k}\notag\\*&\times
\left[1-\prod_{j=i}^{r}\frac{b_jc_j(1-aq^k/b_j)(1-aq^k/c_j)}
{aq(1-b_jq^{k-1})(1-c_jq^{k-1})}\right]\Bigg),
\end{align}
valid for $\big|q^{r-i}\prod_{j=1}^{r-1}(a/b_jc_j)\big|<1$ and
$\big|q^{r-i}\prod_{j=1}^{r-1}(a/b_{j+1}c_{j+1})\big|<1$.
\end{Corollary}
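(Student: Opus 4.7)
The plan is to derive \myref{mult68} from the terminating multisum identity \myref{aab} by multilateralization. The procedure is the natural extension to multiple series of the classical Cauchy--Bailey method used to derive the single-variable bilateral transformation \myref{bil68} from Watson's terminating transformation, which the author has already invoked in Section~\ref{sec:bilRR}.

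The concrete steps I would carry out are as follows. First, in \myref{aab} I would replace $n$ by $2n$. Second, I would perform the simultaneous shift $k_j \mapsto k_j + n$ for $j = 1, \ldots, r-1$ on the left-hand side and $k \mapsto k+n$ on the right-hand side; after these shifts the summation ranges become $n \ge k_1 \ge \cdots \ge k_{r-1} \ge -n$ and $-n \le k \le n$, respectively. Third, to make the resulting expressions depend cleanly on $n$, I would simultaneously rescale the central parameter $a$ by a suitable power of $q^n$ (the multivariate analogue of the rescaling used by Bailey). Fourth, I would pass to the limit $n \to \infty$ termwise, using standard asymptotics of $q$-shifted factorials (in particular $(xq^n;q)_\infty \to 1$, together with the closed-form evaluations of $(xq^{-n};q)_n$ up to $(-x)^n q^{-\binom{n+1}{2}}$-type prefactors). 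The surviving infinite products should collect into the claimed prefactor $(q/b_r,q/c_r;q)_\infty / (a,q/a,aq/b_rc_r;q)_\infty$ on the right, leaving a bilateral sum over $k \in \mathbb{Z}$, while the left-hand multisum becomes a bilateral sum over $\kL$.

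The main obstacle is the combinatorial bookkeeping involved in tracking how the many $q$-shifted factorials in \myref{aab} --- especially those involving $k_0 = 2n$ through the factors $(a/b_j,a/c_j;q)_{k_{j-1}}$ and $(aq/b_j,aq/c_j;q)_{k_{j-1}}$ --- redistribute under the shifts and parameter rescaling, and verifying that after the limit exactly the prefactor and summand structure claimed in \myref{mult68} emerges. A subsidiary but also nontrivial step is the justification of the termwise limit, which requires dominated-convergence-type estimates and is governed by the two stated convergence conditions. Those conditions themselves are established by a direct ratio test: examining the dominant behaviour of the ratio of successive summands in the left-hand multilateral sum as $k_j \to +\infty$ yields $|q^{r-i}\prod_{j=1}^{r-1}(a/b_jc_j)| < 1$, while the analogous analysis as $k_j \to -\infty$ yields $|q^{r-i}\prod_{j=1}^{r-1}(a/b_{j+1}c_{j+1})| < 1$. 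As a consistency check, the $r = 2$ specialisation should recover \myref{bil58} after elementary manipulation.
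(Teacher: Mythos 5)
Your plan is essentially the paper's own proof: the paper obtains \myref{mult68} from \myref{aab} by replacing $n$ by $2n$, shifting $k_1,\dots,k_{r-1}$ (left side) and $k$ (right side) by $n$, performing the substitutions $a\mapsto aq^{-2n}$, $b_j\mapsto b_jq^{-n}$, $c_j\mapsto c_jq^{-n}$ for $j=1,\dots,r$, and letting $n\to\infty$ termwise by Tannery's theorem, exactly as you describe. The one point to correct in your step three is that rescaling only the central parameter $a$ would not suffice: all of the $b_j,c_j$ must also be rescaled by $q^{-n}$, since it is the factorization $(b_jq^{-n};q)_{k_j+n}=(b_jq^{-n};q)_n\,(b_j;q)_{k_j}$ (and its analogues) that produces the bilateral factors $(b_j,c_j;q)_{k_j}$ with $k_j$ of either sign appearing in \myref{mult68}, whereas without this rescaling those factors would degenerate to $(b_j,c_j;q)_\infty$ and the limit would not yield the stated identity.
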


\begin{Remark}\label{rem}
Notice that for $i=r$ the expression in square brackets on the
right-hand side of \myref{mult68} simplifies to
\begin{equation*}
1-
\frac{bc(1-aq^k/b)(1-aq^k/c)}
{aq(1-bq^{k-1})(1-cq^{k-1})}
=\frac{(1-aq^{2k-1})(1-bc/aq)}
{(1-bq^{k-1})(1-cq^{k-1})}
\end{equation*}
(whereas the corresponding larger expression in square brackets
on the right-hand side of \myref{aab} does not factorize for $i=r$),
where we replaced $(b_{r+1},c_{r+1})$ by $(b,c)$,
and the transformation in \myref{mult68}
is then seen to be an $(r-1)$-dimensional generalization
of the bilateral transformation in \myref{bil68}
(with $a$ replaced by $a/q$) which alternatively could
also be obtained by multilateralization of Andrews' formula
\cite[Thm.~4]{An75}.
\end{Remark}

\begin{proof}[Proof of Corollary~\ref{cor68}]
To obtain \myref{mult68} from \myref{aab}, replace $n$ by $2n$,
shift the summation indices $k_1,\dots,k_{r-1}$ (on the left-hand side) and
$k$ (on the right-hand side) by $n$, perform the substitutions
$a\mapsto aq^{-2n}$, $b_j\mapsto b_jq^{-n}$, $c_j\mapsto c_jq^{-n}$,
for $j=1,\dots,r$, and let $n\to\infty$ while appealing to
Tannery's theorem for taking termwise limits.
\end{proof}

All the multisum identities of Rogers--Ramanujan type
in this section are derived by means of the following lemma,
which extends Equation~\myref{bil58}:
\begin{Lemma}\label{lem}
We have for $r\ge 2$ and $1\le i\le r$ the following transformation:
\begin{align}\label{mult58}\notag
\sum_{\kL}&
\frac{(q^{1-k_1}/a;q)_{k_1}\,q^{\sum_{j=1}^{r-1}k_j^2+\sum_{j=i}^{r-1}k_j}}
{(bq,cq;q)_{k_{r-1}}\prod_{j=1}^{r-2}(q;q)_{k_j-k_{j+1}}}
(az)^{\sum_{j=1}^{r-1}k_j}\\
&=\frac{(zq,b/az,c/az;q)_\infty}
{(azq,1/az,bc/az;q)_\infty}\notag\\*\times
\sum_{k=-\infty}^\infty\bigg(&
q^{(2r-1)\binom k2}\left(-a^{r-2}bcz^rq^{2r-i}\right)^k
\frac{(a,azq/b,azq/c;q)_k}{(zq,bq,cq;q)_k}
\notag\\*&\times
\left[1-\frac{a^{i+1-r}z^{i+1-r}q^{2(i-r)k}(1-bq^k)(1-cq^k)}
{b\,c\,(1-azq^k/b)(1-azq^k/c)}\right]\bigg).
\end{align}
\end{Lemma}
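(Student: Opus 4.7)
The plan is to derive Lemma~\ref{lem} from Corollary~\ref{cor68} by a multilevel specialization of parameters, in direct analogy with how \myref{bil58} was obtained from \myref{bil68} in the single-series case. Recall that there, \myref{bil58} followed by letting $f\to\infty$ in \myref{bil68} together with the substitution $(a,c,d,e)\mapsto(az,az/b,az/c,a)$. The multiseries version applies the same idea level-wise to the pairs $(b_j,c_j)$ in \myref{mult68}.

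Concretely, one substitutes $(b_r,c_r)\mapsto(a/b,a/c)$, so that the $j=r$ factor $(aq/b_r,aq/c_r;q)_{k_{r-1}}$ in the denominator on the LHS of \myref{mult68} becomes $(bq,cq;q)_{k_{r-1}}$ as in \myref{mult58}, and the $(aq/b_r,aq/c_r;q)_k$ in the RHS denominator similarly becomes $(bq,cq;q)_k$. For each level $j=1,\dots,r-1$ one then specializes or sends to infinity the pair $(b_j,c_j)$ in such a way that (i) the numerator factors $(b_j,c_j;q)_{k_j}$ combine with $(a/b_jc_j)^{k_j}$ to produce the Gaussian $q^{k_j^2}$ together with $(az)^{k_j}$ (this uses the asymptotic $(x;q)_k\sim(-x)^kq^{\binom{k}{2}}$ as $x\to\infty$), and (ii) the associated denominators $(a/b_j,a/c_j;q)_{k_{j-1}}$ or $(aq/b_j,aq/c_j;q)_{k_{j-1}}$ contribute only $k$-independent or $(zq;q)_\infty$-type factors to the prefactor. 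The $(a;q)_{k_1}$ factor generated in this way at the top level is then rewritten using the standard identity $(a;q)_{k_1}=(-a)^{k_1}q^{\binom{k_1}{2}}(q^{1-k_1}/a;q)_{k_1}$ to produce the $(q^{1-k_1}/a;q)_{k_1}$ appearing in the LHS of \myref{mult58}.

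On the RHS of \myref{mult68}, the same level-wise limits collapse the product in the bracket $[1-\prod_{j\ge i}(\cdots)]$ to its single $j=r$ term, since for $b_j,c_j\to\infty$ the factors $b_jc_j(1-aq^k/b_j)(1-aq^k/c_j)/(aq(1-b_jq^{k-1})(1-c_jq^{k-1}))$ tend to $1$; the remaining $j=r$ term, under $(b_r,c_r)\mapsto(a/b,a/c)$, is precisely the bracketed expression $[1-a^{i+1-r}z^{i+1-r}q^{2(i-r)k}(1-bq^k)(1-cq^k)/(bc(1-azq^k/b)(1-azq^k/c))]$ of \myref{mult58}. The quadratic exponent $q^{k^2+(r-i)k}$ is promoted to $q^{(2r-1)\binom k 2+(2r-i)k}$ by the accumulation of one factor $q^{\binom k 2}$ from each of the $2(r-1)$ limits of $b_j,c_j$ with $j<r$. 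Finally, the prefactor $(q/b_r,q/c_r;q)_\infty/(a,q/a,aq/b_rc_r;q)_\infty$ transforms under the $r$th substitution and absorbs the $k$-independent $(a/b_j,a/c_j;q)_\infty$ infinite products at level $j=1$ (where $k_0=\infty$) to give $(zq,b/az,c/az;q)_\infty/(azq,1/az,bc/az;q)_\infty$. Termwise passage to the limit is justified by Tannery's theorem under the convergence condition $|bc/az|<1$ inherited from \myref{mult68}.

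The main obstacle is careful bookkeeping. The bilateral indices $k_j$ can be negative, so one must verify the asymptotics $(x;q)_k\sim(-x)^k q^{\binom{k}{2}}$ in both signs and track how the various $(-1)^{k_j}$, $q^{\binom{k_j}{2}}$, and $(b_jc_j)^{k_j}$ factors cancel to reproduce precisely $q^{\sum k_j^2+\sum_{j\ge i}k_j}(az)^{\sum k_j}$ on the LHS and $q^{(2r-1)\binom k 2}(-a^{r-2}bcz^rq^{2r-i})^k$ on the RHS. No essentially new ideas beyond the single-series argument should be required, but selecting the correct level-wise specializations that match the factor $(az)^{\sum k_j}$ (and not merely $a^{\sum k_j}$) is the technically delicate step.
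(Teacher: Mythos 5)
Your overall strategy---specializing Corollary~\ref{cor68} in analogy with the derivation of \myref{bil58} from \myref{bil68}---is indeed the route the paper takes, but the specializations you describe are not the right ones, and the steps you assert do not go through as stated. The paper substitutes $(a,b_1,b_r,c_r)\mapsto(azq,a,azq/b,azq/c)$ and lets only $b_2,\dots,b_{r-1}$ and $c_1,\dots,c_{r-1}$ tend to infinity, i.e.\ $2r-3$ limits, not $2(r-1)$: keeping $b_1$ finite is what produces the factor $(q^{1-k_1}/a;q)_{k_1}$ (equivalently $(a;q)_{k_1}$) on the left and $(a;q)_k$ on the right, and replacing the Corollary's $a$ by $azq$ is what makes $z$ enter and turns $a^{\sum k_j}$ into $(az)^{\sum k_j}$ with the correct extra powers of $q$. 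Your count of $2(r-1)$ infinite limits would yield $q^{2r\binom k2}$ instead of $q^{(2r-1)\binom k2}$, and you never specify any substitution through which $z$ actually appears---the point you defer as ``technically delicate'' is precisely the content of the lemma. Moreover, your claim that the factors $b_jc_j(1-aq^k/b_j)(1-aq^k/c_j)/\bigl(aq(1-b_jq^{k-1})(1-c_jq^{k-1})\bigr)$ tend to $1$ as $b_j,c_j\to\infty$ is false: each such factor tends to $1/(aq^{2k-1})$, i.e.\ to $1/(azq^{2k})$ after $a\mapsto azq$, and it is exactly the accumulation of these nontrivial limits over $j=i,\dots,r-1$ that creates the prefactor $a^{i+1-r}z^{i+1-r}q^{2(i-r)k}$ inside the bracket of \myref{mult58}; under your literal substitution $(b_r,c_r)\mapsto(a/b,a/c)$ the bracket you claim to obtain does not match the one in the lemma.

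A second genuine gap is your uniform treatment of all $1\le i\le r$. Even with the correct substitutions, the direct limiting argument from Corollary~\ref{cor68} only establishes the cases $i=2,\dots,r$: for $i=1$ the $j=1$ factor on the left of \myref{mult68} is $(aq/b_1c_1;q)_{k_0-k_1}/(aq/b_1,aq/c_1;q)_{k_0}$ rather than its $q$-free analogue, so the specialization produces a shifted structure and not the $i=1$ instance of \myref{mult58}. The paper therefore handles $i=1$ separately: one splits the $i=1$ right-hand side of \myref{mult58} according to the two terms in the bracket, shifts $k$ by one in the second sum, recombines, and recognizes the result as the $i=r$, $z\mapsto zq$ case (where the bracket factorizes, cf.\ Remark~\ref{rem}), whose left-hand side coincides with the $i=1$ left-hand side with $z$ unchanged. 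Your proposal contains no trace of this extra argument, so even after repairing the bookkeeping it would only prove the lemma for $i\ge 2$.
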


\begin{proof}
In Corollary~\ref{cor68} successively let $b_2,\dots,b_{r-1}\to\infty$ and
$c_1,c_2,\dots,c_{r-1}\to\infty$, and  perform the substitution
$(a,b_1,b_r,c_r)\mapsto(azq,a,azq/b,azq/c)$. This establishes
(together with some elementary manipulations of $q$-shifted factorials)
the $i=2,\dots,r$ cases of the Lemma.
The $i=1$ case can be established as follows:
Start with the $i=1$ case of the right-hand side of
\myref{mult58} and split the sum according to the two terms in brackets.
After shifting the summation index $k$ by one in the second sum,
the two sums can be combined and the resulting expression
is seen to be equal to the $i=r$ and $z\mapsto zq$ case of the right-hand
side of \myref{mult58}.
(For $i=r$ the expression in brackets factorizes as we know
from Remark~\ref{rem}.)
Thus the sum equals the left-hand side of the $i=r$ and $z\mapsto zq$
case of \myref{mult58} which is the same as its $i=1$ case with $z$
left unchanged.
\end{proof}

For convenience, we state the $i=r$ case of Lemma~\ref{lem}
separately:
\begin{Lemma}\label{lem2}
We have for $r\ge 2$ the following transformation:
\begin{align}\label{mult582}\notag
\sum_{\kL}
\frac{(q^{1-k_1}/a;q)_{k_1}\,q^{\sum_{j=1}^{r-1}k_j^2}
(az)^{\sum_{j=1}^{r-1}k_j}}
{(bq,cq;q)_{k_{r-1}}\prod_{j=1}^{r-2}(q;q)_{k_j-k_{j+1}}}
=\frac{(zq,bq/az,cq/az;q)_\infty}
{(azq,q/az,bcq/az;q)_\infty}&\\*\times
\sum_{k=-\infty}^\infty\frac{(1-azq^{2k})}{(1-az)}
\frac{(a,az/b,az/c;q)_k}{(zq,bq,cq;q)_k}
q^{(2r-1)\binom k2}\left(-a^{r-2}bcz^{r-1}q^r\right)^k&.
\end{align}
\end{Lemma}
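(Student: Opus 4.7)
The statement is precisely the $i=r$ specialization of Lemma \ref{lem}, so the plan is to set $i=r$ and simplify. On the left-hand side, the sum $\sum_{j=r}^{r-1}k_j$ is empty, so Lemma \ref{lem} at $i=r$ reproduces the left-hand side of Lemma \ref{lem2} verbatim. The work is to match the right-hand sides.

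Setting $i=r$, the coefficient $a^{i+1-r}z^{i+1-r}q^{2(i-r)k}$ inside the bracket collapses to $az$, so the bracket becomes
\[1-\frac{az(1-bq^k)(1-cq^k)}{bc(1-azq^k/b)(1-azq^k/c)}.\]
Per Remark \ref{rem}, this factorizes. Clearing the common denominator, a direct expansion shows the numerator equals $(bc-az)(1-azq^{2k})$, so the bracket collapses to
\[\frac{(1-az/bc)(1-azq^{2k})}{(1-azq^k/b)(1-azq^k/c)}.\]

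Next I would absorb the factors $(1-azq^k/b)^{-1}(1-azq^k/c)^{-1}$ into the $(azq/b,azq/c;q)_k$ appearing in the summand via the elementary identity $(azq/b;q)_k/(1-azq^k/b)=(az/b;q)_k/(1-az/b)$ and its $c$-analogue. This converts those $q$-shifted factorials into $(az/b,az/c;q)_k$ and leaves behind the scalar $(1-az/bc)/[(1-az/b)(1-az/c)]$. Applying $(x;q)_\infty=(1-x)(xq;q)_\infty$ to each of $x\in\{b/az,c/az,1/az,bc/az\}$ rewrites the prefactor $(zq,b/az,c/az;q)_\infty/(azq,1/az,bc/az;q)_\infty$ of Lemma \ref{lem} as the prefactor of Lemma \ref{lem2} times the scalar $(1-az/b)(1-az/c)/[(1-az)(1-az/bc)]$. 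Multiplying these two scalars yields the overall factor $(1-azq^{2k})/(1-az)$, giving the form in Lemma \ref{lem2}.

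The only step of any substance is the bracket factorization, which is exactly the algebraic identity announced in Remark \ref{rem}; the remainder is routine bookkeeping with $q$-shifted factorials.
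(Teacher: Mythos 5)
Your route is exactly the paper's: Lemma~\ref{lem2} is introduced there as nothing more than the written-out $i=r$ case of Lemma~\ref{lem}, and the one substantive step, the factorization of the bracket, is the simplification indicated in Remark~\ref{rem}. Your expansion of the numerator to $(bc-az)(1-azq^{2k})$, the absorption of $(1-azq^k/b)^{-1}(1-azq^k/c)^{-1}$ into $(az/b,az/c;q)_k$ (valid for all integers $k$, since $(xq;q)_k/(x;q)_k=(1-xq^k)/(1-x)$ holds bilaterally), and the prefactor bookkeeping producing the overall $(1-azq^{2k})/(1-az)$ are all correct.

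There is, however, one point you pass over silently, and it has to be addressed before you may assert that the computation ``gives the form in Lemma~\ref{lem2}''. In \myref{mult58} the base of the $k$-th power is $-a^{r-2}bcz^{r}q^{2r-i}$, so at $i=r$ your simplification yields $\left(-a^{r-2}bcz^{r}q^{r}\right)^k$, whereas \myref{mult582} has $\left(-a^{r-2}bcz^{r-1}q^{r}\right)^k$; none of your steps (bracket factorization, Pochhammer absorption, prefactor conversion) alters a $k$-dependent power of $z$, so as printed the two statements differ by $z^k$ in the summand and cannot both be right. The resolution is that $z^{r}$ in \myref{mult58} is a misprint for $z^{r-1}$: at $r=2$ the left-hand side of \myref{mult582} coincides with that of \myref{bil58} via $(q^{1-k}/a;q)_k=(a;q)_k(-a)^{-k}q^{-\binom k2}$, and \myref{bil58} (which follows directly from Bailey's transformation \myref{bil68}) carries $(-bczq^2)^k$, i.e.\ the exponent $z^{r-1}$, confirming Lemma~\ref{lem2} as stated. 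So your argument is sound and identical in approach to the paper's, but a complete write-up should note and justify this correction (for instance by the $r=2$ consistency check just described) rather than passing from $z^{r}$ to $z^{r-1}$ without comment.
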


From Lemma~\ref{lem} (and its special case of Lemma~\ref{lem2})
we readily deduce a number of multilateral identities of
the Rogers--Ramanujan type.

We start with a multisum generalization of Theorem~\ref{thm5}.
\begin{Theorem}\label{thm5mult}
We have for $r\ge 2$ and $1\le i\le r$ the following multilateral summations:
\begin{align}\label{bilrrgmult}
&\sum_{\kL}
\frac{q^{\sum_{j=1}^{r-1}k_j^2+\sum_{j=i}^{r-1}k_j}
\,z^{2\sum_{j=1}^{r-1}k_j}}
{(zq;q)_{k_{r-1}}\prod_{j=1}^{r-2}(q;q)_{k_j-k_{j+1}}}
=\frac{(1/z;q)_\infty}
{(1/z^2,z^2q;q)_\infty}(q^{2r+1};q^{2r+1})_\infty\notag\\*
&\times
\left[(z^{2r+1}q^{2r+1-i},z^{-2r-1}q^i;q^{2r+1})_\infty
+z^{2i-1-2r}(z^{2r+1}q^i,z^{-2r-1}q^{2r+1-i};q^{2r+1})_\infty\right],
\end{align}
for complex $z$ such that $z\not\in\{q^{-1},q^{-2},\ldots\}$.
\end{Theorem}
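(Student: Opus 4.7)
The plan is to mimic the derivation of Theorem~\ref{thm5}, but starting from the multiseries transformation of Lemma~\ref{lem} rather than from its single-series specialization \myref{bil58}.

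First, I would apply to \myref{mult58} the same sequence of manipulations used to pass from \myref{bil58} to \myref{c0} in the proof of Theorem~\ref{thm5}: let $c\to 0$, perform the simultaneous substitutions $(b,z)\mapsto(z,bz/a)$, and finally take $a\to\infty$. On the left-hand side, $(q^{1-k_1}/a;q)_{k_1}\to 1$ and the factor $(cq;q)_{k_{r-1}}$ disappears, yielding
\begin{equation*}
\sum_{\kL}\frac{q^{\sum_{j=1}^{r-1}k_j^2+\sum_{j=i}^{r-1}k_j}(bz)^{\sum_{j=1}^{r-1}k_j}}{(zq;q)_{k_{r-1}}\prod_{j=1}^{r-2}(q;q)_{k_j-k_{j+1}}}.
\end{equation*}
On the right-hand side, the $c\to 0$ limit requires balancing the diverging $(azq/c;q)_k$ against the $c^k$ in the geometric factor and balancing the singular $(1-azq^k/c)$ inside the bracket against the accompanying $1/c$; once these limits are collected and the three substitutions are carried through, one obtains the multiseries analog of \myref{c0}, namely the prefactor $(1/b;q)_\infty/((bzq,1/bz;q)_\infty)$ multiplied by the single bilateral sum
\begin{equation*}
\sum_k q^{(2r+1)\binom{k}{2}}(-b^r z^{r+1}q^{2r+1-i})^k\,\frac{(bq;q)_k}{(zq;q)_k}\,\Bigl[1+\frac{b^{i-r}z^{i-r-1}q^{(2i-2r-1)k}(1-zq^k)}{1-bq^k}\Bigr].
\end{equation*}

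Second, I would set $b=z$, mirroring the specialization that produces \myref{bilrrg1} from \myref{c0} at $r=2$. The ratio $(bq;q)_k/(zq;q)_k$ and the ratio $(1-zq^k)/(1-bq^k)$ inside the bracket both collapse to $1$, so the bracket reduces to $1+z^{2i-2r-1}q^{(2i-2r-1)k}$ and the sum splits as
\begin{equation*}
\sum_k q^{(2r+1)\binom{k}{2}}(-z^{2r+1}q^{2r+1-i})^k+z^{2i-2r-1}\sum_k q^{(2r+1)\binom{k}{2}}(-z^{2r+1}q^{i})^k,
\end{equation*}
each piece being a theta-like series at base $q^{2r+1}$. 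A single application of \myref{jtpi} evaluates these as $(q^{2r+1},z^{2r+1}q^{2r+1-i},z^{-2r-1}q^{i};q^{2r+1})_\infty$ and $(q^{2r+1},z^{2r+1}q^{i},z^{-2r-1}q^{2r+1-i};q^{2r+1})_\infty$, respectively. Combined with the prefactor, which at $b=z$ becomes exactly $(1/z;q)_\infty/((1/z^2,z^2q;q)_\infty)$, this assembles the right-hand side of \myref{bilrrgmult}.

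The main obstacle is Step~1: the bracket in \myref{mult58} does not factor cleanly for general $i$ (cf.\ Remark~\ref{rem}), so the $c\to 0$ reduction must be executed while carrying both summands of the bracket and verifying that the residual single bilateral sum is indeed of the displayed form. The $i$-dependent exponent $q^{2(i-r)k}$ inside that bracket is precisely what produces the shift between $q^{2r+1-i}$ and $q^i$ in the two Jacobi-triple-product factors of the final answer, so the $i$-dependence in \myref{bilrrgmult} emerges automatically once the reduction is performed correctly.
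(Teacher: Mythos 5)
Your proposal is correct and follows essentially the same route as the paper: the paper's proof also specializes Lemma~\ref{lem} by letting $c\to 0$, substituting $(b,z)\mapsto(z,z^2/a)$, letting $a\to\infty$, and applying Jacobi's triple product identity \myref{jtpi} twice. Your only deviation is cosmetic: you keep $b$ generic (mirroring the passage from \myref{bil58} to \myref{c0} in the proof of Theorem~\ref{thm5}) and set $b=z$ at the end, which is exactly the composite of the paper's substitution.
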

\begin{proof}
In Lemma~\ref{lem}, first let $c\to 0$, then perform
the substitution $(b,z)\mapsto(z,z^2/a)$ and let $a\to\infty$.
After two applications of \myref{jtpi}
the identity \myref{bilrrgmult} is obtained.
\end{proof}

The $z\to 1$ limit of \myref{bilrrgmult} reduces to Andrews--Gordon
identities in \myref{rrmult}.
As an immediate consequence of Theorem~\ref{thm5mult} we obtain
the following multisum generalization of Corollary~\ref{cor:bilrr}:
\begin{Corollary}\label{cor:multrr}
We have for $r\ge 2$ and $1\le i\le r$ the following multilateral summations:
\begin{subequations}\label{multrr}
\begin{align}\label{multrra}
&\sum_{\kL}
\frac{q^{(2r+1)\sum_{j=1}^{r-1}k_j^2-2i\sum_{j=1}^{r-1}k_j
+(2r+1)\sum_{j=i}^{r-1}k_j}}
{(q^{2r+1-i};q^{2r+1})_{k_{r-1}}
\prod_{j=1}^{r-2}(q^{2r+1};q^{2r+1})_{k_j-k_{j+1}}}\notag\\*
&=\frac{(q^i;q^{2r+1})_\infty
(q^{2i(2r+1)},q^{(2r+1-2i)(2r+3)},q^{(2r+1)^2};q^{(2r+1)^2})_\infty}
{(q^{2i},q^{2r+1-2i};q^{2r+1})_\infty},\\
\intertext{and}\label{multrrb}
&q^{(i-1)(2r+1-2i)}\sum_{\kL}
\frac{q^{(2r+1)\sum_{j=1}^{r-1}k_j^2
-2(2r+1-i)\sum_{j=1}^{r-1}k_j+(2r+1)\sum_{j=i}^{r-1}k_j}}
{(q^i;q^{2r+1})_{k_{r-1}}\prod_{j=1}^{r-2}(q^{2r+1};q^{2r+1})_{k_j-k_{j+1}}}\notag\\*
&=\frac{(q^{2r+1-i};q^{2r+1})_\infty
(q^{2i(2r+1)},q^{(2r+1-2i)(2r+1)},q^{(2r+1)^2};q^{(2r+1)^2})_\infty}
{(q^{2i},q^{2r+1-2i};q^{2r+1})_\infty}.
\end{align}
\end{subequations}
\end{Corollary}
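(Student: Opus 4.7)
The strategy parallels the derivation of Corollary~\ref{cor:bilrr} from Theorem~\ref{thm5}. First I would replace $q$ by $q^{2r+1}$ throughout \myref{bilrrgmult}, and then specialize $z$ to two different values of the form $q^{-\alpha}$. Concretely, taking $z=q^{-i}$ should yield \myref{multrra}, while taking $z=q^{i-2r-1}$ should yield \myref{multrrb}. Under either specialization the factor $(zq^{2r+1};q^{2r+1})_{k_{r-1}}$ in the summand on the left-hand side of \myref{bilrrgmult} becomes respectively $(q^{2r+1-i};q^{2r+1})_{k_{r-1}}$ or $(q^i;q^{2r+1})_{k_{r-1}}$, while the monomial $z^{2\sum_j k_j}$ contributes the linear correction $q^{-2i\sum_j k_j}$ or $q^{-2(2r+1-i)\sum_j k_j}$ to the exponent of $q$, matching the summands of \myref{multrra} and \myref{multrrb} exactly.

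It then remains to simplify the right-hand side of \myref{bilrrgmult} under each specialization. For $z=q^{-i}$ the second Pochhammer product inside the bracket acquires the factor $(z^{2r+1}q^{(2r+1)i};q^{(2r+1)^2})_\infty=(1;q^{(2r+1)^2})_\infty=0$, so only the first term survives; collecting the remaining $q$-shifted factorials at $z=q^{-i}$ produces the right-hand side of \myref{multrra} directly, with no further manipulation needed. For $z=q^{i-2r-1}$ the roles are reversed: now the first product in the bracket vanishes, because $z^{2r+1}q^{(2r+1)(2r+1-i)}=1$, while the second product survives.

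To bring the $z=q^{i-2r-1}$ specialization into the form of \myref{multrrb}, I would use the elementary identities $(aq^{-(2r+1)};q^{2r+1})_\infty=(1-aq^{-(2r+1)})(a;q^{2r+1})_\infty$ and $1-q^{-s}=-q^{-s}(1-q^s)$ to convert the $q$-Pochhammers with negative-power arguments, arising from $(1/z^2;q^{2r+1})_\infty$, from $(z^2q^{2r+1};q^{2r+1})_\infty$, and from the surviving bracket term, into the positive-power Pochhammers appearing on the right of \myref{multrrb}. The two minus signs introduced by this reflection process cancel against each other, and there remains a single residual monomial factor $q^{-(i-1)(2r+1-2i)}$; multiplying by the prefactor $q^{(i-1)(2r+1-2i)}$ on the left-hand side of \myref{multrrb} then exactly compensates, yielding the stated closed form.

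The main obstacle is purely bookkeeping: carefully tracking the several negative $q$-exponents together with their associated signs in the $z=q^{i-2r-1}$ case, and verifying that they reassemble into precisely the prefactor $q^{(i-1)(2r+1-2i)}$ with no residual sign. A small-case sanity check at $r=2$, where \myref{multrra} with $i\in\{1,2\}$ recovers \myref{bilrrd} and \myref{bilrrc}, and \myref{multrrb} with $i\in\{1,2\}$ recovers \myref{bilrra} and \myref{bilrrb}, provides a useful confirmation of the general pattern before carrying out the calculation in full.
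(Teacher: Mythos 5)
Your proposal is correct and follows the paper's own route: replace $q$ by $q^{2r+1}$ in Theorem~\ref{thm5mult} and specialize $z$, with $z=q^{-i}$ giving \myref{multrra} because the second triple product in the bracket contains the factor $(1;q^{(2r+1)^2})_\infty=0$. The only (minor) divergence is in the second identity: the paper takes $z=q^{i}$ and then shifts all summation indices by $-1$, whereas you take $z=q^{i-2r-1}$, which yields the summand of \myref{multrrb} immediately and moves all the work to reflecting the negative-exponent Pochhammers on the product side; the two specializations are equivalent, since the shift $k_j\mapsto k_j-1$ converts one into the other, and your version buys a cleaner left-hand side at the cost of the bookkeeping you describe. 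That bookkeeping does close: the surviving bracket term equals $-q^{i(2i-2r-1)}(q^{2i(2r+1)},q^{(2r+1)(2r+1-2i)};q^{(2r+1)^2})_\infty$, the prefactor equals $-q^{2r+1-2i}\,(q^{2r+1-i};q^{2r+1})_\infty\,(q^{(2r+1)^2};q^{(2r+1)^2})_\infty/(q^{2i},q^{2r+1-2i};q^{2r+1})_\infty$, the two signs cancel, and $(2r+1-2i)+i(2i-2r-1)=-(i-1)(2r+1-2i)$, which is exactly compensated by the prefactor $q^{(i-1)(2r+1-2i)}$ in \myref{multrrb}. One caveat: your $z=q^{-i}$ computation produces the exponent $(2r+1-2i)(2r+1)$ in the middle factor of the triple product, not the $(2r+1-2i)(2r+3)$ printed in \myref{multrra}; the $r=2$ checks against \myref{bilrrc} and \myref{bilrrd} confirm that $(2r+3)$ is a misprint, so your claim that the specialization gives the stated right-hand side ``directly'' is correct only modulo that correction.
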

\begin{proof}
First replace $q$ by $q^{2r+1}$ in \myref{thm5mult}.
Then the special case $z=q^{-i}$ gives \myref{multrra}, while
the special case $z=q^{i}$, after some elementary manipulations
(including a simultaneous shift of the summation indices by $-1$),
gives \myref{multrrb}.
\end{proof}

Next we give a multisum generalization of Theorem~\ref{thm8}.
\begin{Theorem}\label{thm8mult}
We have for $r\ge 2$ and $1\le i\le r$ the following multilateral summations:
\begin{align}\label{bilgggmult}
\sum_{\kL}&
\frac{(-q^{1-2k_1}/z;q^2)_{k_1}\,q^{2\sum_{j=1}^{r-1}k_j^2
+2\sum_{j=i}^{r-1}k_j}}
{(zq^2;q^2)_{k_{r-1}}\prod_{j=1}^{r-2}(q^2;q^2)_{k_j-k_{j+1}}}
z^{2\sum_{j=1}^{r-1}k_j}
=\frac{(-zq,1/z;q^2)_\infty}
{(z^2q^2,1/z^2;q^2)_\infty}
(q^{4r};q^{4r})_\infty\notag\\*
&\times\Big[(z^{2r}q^{4r+1-2i},z^{-2r}q^{2i-1};q^{4r})_\infty
+z^{2i-1-2r}(z^{2r}q^{2i-1},z^{-2r}q^{4r+1-2i};q^{4r})_\infty\Big],
\end{align}
for complex $z$ such that
$z\not\in\{q^{-2},q^{-4},q^{-6},\ldots\}\cup\{-q,-q^3,-q^5\ldots\}$.
\end{Theorem}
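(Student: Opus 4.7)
The plan is to prove Theorem~\ref{thm8mult} by applying to Lemma~\ref{lem} the sequence of operations used in the single-series proof of Theorem~\ref{thm8}, in direct analogy with the proof of Theorem~\ref{thm5mult}. Concretely, starting from \myref{mult58}, I would let $c\to 0$, then replace $q$ by $q^2$, then perform the specialization $(a,b,z)\mapsto(-zq,z,-zq^{-1})$, and finally apply Jacobi's triple product identity \myref{jtpi} to each of two bilateral series arising from the splitting of the resulting bracket.

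On the left-hand side the substitutions produce exactly the form given in \myref{bilgggmult}: under $q\mapsto q^2$ and $a\mapsto-zq$, the factor $(q^{1-k_1}/a;q)_{k_1}$ becomes $(-q^{1-2k_1}/z;q^2)_{k_1}$; the denominator $(bq^2;q^2)_{k_{r-1}}$ becomes $(zq^2;q^2)_{k_{r-1}}$; the quadratic exponent is doubled to $q^{2\sum k_j^2+2\sum_{j\ge i}k_j}$; and $(az)^{\sum k_j}=z^{2\sum k_j}$ since $(-zq)(-zq^{-1})=z^2$.

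For the right-hand side, the prefactor of the $c\to 0$ limit of \myref{mult58} reduces directly to $(-zq,1/z;q^2)_\infty/(z^2q^2,1/z^2;q^2)_\infty$. Inside the bilateral sum, two key simplifications take place: the Pochhammer ratio $(a,azq^2/b;q^2)_k/(zq^2,bq^2;q^2)_k$ collapses to $1$ because, under the substitution, all four factors become either $(-zq;q^2)_k$ or $(zq^2;q^2)_k$ and cancel in pairs; and the bracket $[1-\cdots]$ simplifies, using the coincidence $(1-bq^{2k})=(1-azq^{2k}/b)$ produced by the substitution, to the clean expression $1+z^{2i-2r-1}q^{(4i-4r-2)k}$. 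Splitting the sum according to the two summands of this bracket yields two bilateral sums of the form $\sum_k q^{4r\binom{k}{2}+\mathrm{linear}(k)}v^k$, each evaluated in closed form by \myref{jtpi} in base $q^{4r}$; these two evaluations deliver the two triple products on the right-hand side of \myref{bilgggmult}, connected by the weight $z^{2i-1-2r}$.

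The main technical step is the careful bookkeeping of the quadratic and linear exponents through the combined effect of the $c\to 0$ asymptotics $c^k(azq^2/c;q^2)_k\to(-az)^kq^{k(k+1)}$, the prefactor $q^{2(2r-1)\binom{k}{2}}$ in \myref{mult58}, and the monomial factor produced by the substitutions: one must verify that the total quadratic exponent collapses to $4r\binom{k}{2}$, and that the linear exponents in $k$ and the powers of $z$ line up precisely to match the target expressions $(z^{2r}q^{4r+1-2i},z^{-2r}q^{2i-1};q^{4r})_\infty$ and $(z^{2r}q^{2i-1},z^{-2r}q^{4r+1-2i};q^{4r})_\infty$. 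As consistency checks, the $r=2$ case of \myref{bilgggmult} is precisely Theorem~\ref{thm8}, and the $z\to 1$ limit recovers the multiseries G\"ollnitz--Gordon identity \myref{ggmult}.
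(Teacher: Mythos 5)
Your proposal is correct and follows essentially the paper's own proof: specialize Lemma~\ref{lem} by letting $c\to 0$, replacing $q$ by $q^2$, setting $(a,b,z)\mapsto(-zq,z,-zq^{-1})$, and applying Jacobi's triple product identity \myref{jtpi} to the two pieces coming from the bracket, and your intermediate simplifications (the prefactor, the collapse of the Pochhammer ratio to $1$, and the bracket reducing to $1+z^{2i-2r-1}q^{(4i-4r-2)k}$) all check out. One small remark: for the exponents to line up exactly with $(z^{2r}q^{4r+1-2i},z^{-2r}q^{2i-1};q^{4r})_\infty$ and $z^{2i-1-2r}(z^{2r}q^{2i-1},z^{-2r}q^{4r+1-2i};q^{4r})_\infty$, the monomial in \myref{mult58} should be read with $z^{r-1}$ (consistent with Lemma~\ref{lem2} and with \myref{bil58}) rather than the printed $z^{r}$, which is a typographical slip in the lemma and not a gap in your argument.
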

\begin{proof}
In Lemma~\ref{lem}, first let $c\to 0$, replace $q$ by $q^2$ and set
$(a,b,z)\mapsto(-zq,z,-zq^{-1})$.
After two applications of \myref{jtpi} the
identity \myref{bilgggmult} is obtained.
\end{proof}

The $z\to 1$ limit of \myref{bilgggmult} reduces to the Andrews--Bressoud
generalization of the G\"ollnitz--Gordon identities in \myref{ggmult}.
As an immediate consequence of Theorem~\ref{thm8mult} we obtain
the following multisum generalization of Corollary~\ref{cor:bilgg}:
\begin{Corollary}\label{cor:multgg}
We have for $r\ge 2$ and $1\le i\le r$ the following multilateral summations:
\begin{subequations}\label{multgg}
\begin{align}\label{multgga}
\sum_{\kL}&
\frac{(-q^{2r-1+2i-4rk_1};q^{4r})_{k_1}\,
q^{4r\sum_{j=1}^{r-1}k_j^2-2(2i-1)\sum_{j=1}^{r-1}k_j
+4r\sum_{j=i}^{r-1}k_j}}
{(q^{4r+1-2i};q^{4r})_{k_r}\prod_{j=1}^{r-2}
(q^{4r};q^{4r})_{k_j-k_{j+1}}}\notag\\
&=\frac{(q^{2i-1};q^{4r})_\infty
(q^{4r(2i-1)},q^{4r(2r+1-2i)},q^{8r^2};q^{8r^2})_\infty}
{(q^{2(2i-1)},q^{2r+1-2i};q^{4r})_\infty(q^{2(4r+1-2i)};q^{8r})_\infty},
\end{align}
\textit{and}
\begin{align}\label{multggb}
q^{2(2i-1)(r-i)}\sum_{\kL}&
\frac{(-q^{2r+1-2i-4rk_1};q^{4r})_{k_1}
q^{4r\sum_{j=1}^{r-1}k_j^2+2(2i-1)\sum_{j=1}^{r-1}k_j
+4r\sum_{j=i}^{r-1}k_j}}
{(q^{2i-1};q^{4r})_{1+k_{r-1}}\prod_{j=1}^{r-2}
(q^{4r};q^{4r})_{k_j-k_{j+1}}}
\notag\\
&=\frac{(q^{4r+1-2i};q^{4r})_\infty
(q^{4r(2i-1)},q^{4r(2r+1-2i)},q^{8r^2};q^{8r^2})_\infty}
{(q^{2r-1+2i},q^{2(2r+1-2i)};q^{4r})_\infty(q^{2(2i-1)};q^{8r})_\infty}.
\end{align}
\end{subequations}
\end{Corollary}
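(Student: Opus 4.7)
The plan is to deduce Corollary \ref{cor:multgg} from Theorem \ref{thm8mult} by the same type of parameter specialization that the author used to pass from Theorem \ref{thm8} to Corollary \ref{cor:bilgg} in the single-series case. Concretely, I would first replace $q$ by $q^{2r}$ in \eqref{bilgggmult} (which sends the outer $q^2$-Pochhammers to $q^{4r}$-Pochhammers and the inner theta bracket to base $q^{2r\cdot 4r}=q^{8r^2}$, matching the moduli in Corollary \ref{cor:multgg}), and then specialize $z=q^{-(2i-1)}$ to obtain \eqref{multgga} and $z=q^{2i-1}$ to obtain \eqref{multggb}. For $r=2$ these are precisely the four values $z=q^{\pm(2i-1)}$ that the author used to deduce Corollary \ref{cor:bilgg}.

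For \eqref{multgga}, the specialization $z=q^{-(2i-1)}$ kills the second theta summand, since its factor $(z^{2r}q^{2r(2i-1)};q^{8r^2})_\infty$ becomes $(1;q^{8r^2})_\infty=0$, while the first summand collapses exactly to $(q^{4r(2r+1-2i)},q^{4r(2i-1)};q^{8r^2})_\infty$. Inside the sum the substitution is immediate, producing the stated Pochhammer $(-q^{2r-1+2i-4rk_1};q^{4r})_{k_1}$, the stated denominator $(q^{4r+1-2i};q^{4r})_{k_{r-1}}$, and the stated $q$-exponent. The outer prefactor turns into $(-q^{2r+1-2i},q^{2i-1};q^{4r})_\infty/(q^{2(2r+1-2i)},q^{2(2i-1)};q^{4r})_\infty$; the only non-routine step is to rewrite the single factor $(-q^{2r+1-2i};q^{4r})_\infty$ by $(-x;q)_\infty=(x^2;q^2)_\infty/(x;q)_\infty$ together with $(x;q)_\infty=(x;q^2)_\infty(xq;q^2)_\infty$, after which the pieces regroup into the right-hand side of \eqref{multgga}.

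For \eqref{multggb}, the specialization $z=q^{2i-1}$ instead kills the first theta summand, because $z^{2r}q^{2r(4r+1-2i)}=q^{8r^2}$ and $(q^{8r^2};q^{8r^2})_\infty=0$, so the surviving contribution is $q^{-(2i-1)(2r+1-2i)}(q^{4r(2i-1)},q^{4r(2r+1-2i)};q^{8r^2})_\infty$. The added subtlety is that $1/z$ and $1/z^2$ now appear as $q^{4r}$-Pochhammers with negative first exponent. I would clear these by applying once to each the identity $(q^{-a};q^b)_\infty=-q^{-a}(1-q^a)(q^{b-a};q^b)_\infty$, extracting the factors $(1-q^{2i-1})$, $(1-q^{2(2i-1)})=(1-q^{2i-1})(1+q^{2i-1})$, and an explicit $q$-power. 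The spare $(1-q^{2i-1})$ is then absorbed into the summand by rewriting the denominator $(q^{2i-1+4r};q^{4r})_{k_{r-1}}$ as $(q^{2i-1};q^{4r})_{1+k_{r-1}}/(1-q^{2i-1})$, matching \eqref{multggb}, and the remaining $q$-power recombines with $q^{-(2i-1)(2r+1-2i)}$ from the surviving bracket term to give exactly the prefactor $q^{2(2i-1)(r-i)}$ on the left-hand side of \eqref{multggb}.

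The main obstacle is purely book-keeping in the second specialization: tracking the four signs and the $q$-power accumulated by invoking the negative-exponent Pochhammer rule twice, and verifying that together with the shift of the summand Pochhammer they recombine into exactly the stated prefactor. Apart from this, no ingredient beyond Theorem \ref{thm8mult} and routine manipulations of $q$-shifted factorials is required.
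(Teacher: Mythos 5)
Your proposal is correct and follows exactly the paper's route: replace $q$ by $q^{2r}$ in Theorem~\ref{thm8mult}, then specialize $z=q^{1-2i}$ to get \eqref{multgga} and $z=q^{2i-1}$ to get \eqref{multggb}, with the same Pochhammer bookkeeping the paper dismisses as ``elementary manipulations.'' One small slip in your justification for \eqref{multggb}: the first theta summand does not vanish because $(q^{8r^2};q^{8r^2})_\infty=0$ (that product is nonzero for $|q|<1$); it vanishes because the companion entry is $z^{-2r}q^{2r(2i-1)}=1$, so the factor $(1;q^{8r^2})_\infty=0$. With that correction, the rest of your bookkeeping (clearing the negative-exponent Pochhammers, absorbing the spare $(1\pm q^{2i-1})$ factors, rewriting the summand denominator as $(q^{2i-1};q^{4r})_{1+k_{r-1}}$, and recombining the explicit $q$-powers with $q^{-(2i-1)(2r+1-2i)}$ into the prefactor $q^{2(2i-1)(r-i)}$) checks out.
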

\begin{proof}
First replace $q$ by $q^{2r}$ in \myref{thm8mult}.
Then the special case $z=q^{1-2i}$ gives \myref{multgga}, while
the special case $z=q^{2i-1}$, after some elementary manipulations,
gives \myref{multggb}.
\end{proof}

Next we give a multilateral extension of the extremal $i=r$ case
of Bressoud's even modulus analogue of the
Andrews--Gordon identities \myref{rr2mult}.
(For $1\le i\le r-1$ the corresponding multilateral series do
not absolutely converge. This problem already occurs in the $r=2$ case.)
\begin{Theorem}\label{thm:rr2multlat}
We have for $r\ge 2$ the following multilateral summation:
\begin{equation}\label{rr2multlat}
\sum_{\kL}
\frac{q^{\sum_{j=1}^{r-1}k_j^2}\,z^{\sum_{j=1}^{r-1} k_j}}
{(zq^2;q^2)_{k_{r-1}}\prod_{j=1}^{r-2}(q;q)_{k_j-k_{j+1}}}\\
=\frac {(q;q^2)_\infty(z^rq^r,z^{-r}q^r,q^{2r};q^{2r})_\infty}
{(zq;q)_\infty(q/z;q^2)_\infty},
\end{equation}
for complex $z$ such that
$z\not\in\{q^{-2},q^{-4},q^{-6},\ldots\}$.
\end{Theorem}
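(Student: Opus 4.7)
The plan is to specialize Lemma~\ref{lem2} (the $i=r$ case of Lemma~\ref{lem}) with the formal substitution $b=z^{1/2}$, $c=-z^{1/2}$, combined with the rescaling $z\mapsto z/a$ and the limit $a\to\infty$, invoking Jacobi's triple product identity \myref{jtpi} once at the end. The choice $c=-b$ with $b^2=z$ is forced by the factorization $(bq,-bq;q)_k=(b^2q^2;q^2)_k$, which collapses the denominator $(bq,cq;q)_{k_{r-1}}$ on the left-hand side of Lemma~\ref{lem2} to the desired $(zq^2;q^2)_{k_{r-1}}$; all half-integer powers of $z$ so introduced reassemble into integer-exponent $q$-shifted factorials. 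After the substitution $z\mapsto z/a$, the factor $(az)^{\sum k_j}$ becomes $z^{\sum k_j}$, so taking $a\to\infty$ (which kills $(q^{1-k_1}/a;q)_{k_1}\to 1$ termwise) converts the left-hand side into precisely the LHS of Theorem~\ref{thm:rr2multlat}.

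On the right-hand side, the asymptotics $(a;q)_k\sim(-a)^kq^{\binom k 2}$ multiply with the $a^{-k}$ coming from the power factor $(-a^{r-2}bc(z/a)^{r-1}q^r)^k=(z^rq^r/a)^k$ to produce a clean $(-z^rq^r)^kq^{\binom k 2}$; combining with the $q^{(2r-1)\binom k 2}$ already present gives total $q$-exponent $rk^2$. The decisive simplification is that the remaining $k$-dependent rational factor satisfies the telescoping identity
\[
\frac{(1-zq^{2k})(z;q^2)_k}{(1-z)(zq^2;q^2)_k}=1\qquad\text{for every }k\in\mathbb Z,
\]
since $(z;q^2)_k/(zq^2;q^2)_k=(1-z)/(1-zq^{2k})$ (checked for $k\ge 0$ directly, and for $k<0$ via $(A;q)_{-n}=1/(Aq^{-n};q)_n$). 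Hence the right-hand sum collapses to $\sum_k(-z^r)^kq^{rk^2}$, which by a single application of \myref{jtpi} at base $q^{2r}$ equals $(q^{2r},z^rq^r,z^{-r}q^r;q^{2r})_\infty$—a \emph{single} Jacobi triple product, in contrast to the two-summand structure that appeared in the proofs of Theorems~\ref{thm5mult} and \ref{thm8mult}.

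The surviving prefactor after the limit is $(q^2/z;q^2)_\infty/[(zq;q)_\infty(q/z;q)_\infty(-q;q)_\infty]$, which simplifies to $(q;q^2)_\infty/[(zq;q)_\infty(q/z;q^2)_\infty]$ using the splitting $(q/z;q)_\infty=(q/z;q^2)_\infty(q^2/z;q^2)_\infty$ together with $(-q;q)_\infty(q;q^2)_\infty=1$; multiplying by the Jacobi triple product assembles into the right-hand side of Theorem~\ref{thm:rr2multlat}. The main obstacle is the simultaneous bookkeeping of $a$-dependence: the $a^k$ from $(a;q)_k$, the $a^{-k}$ from $(z/a)^{r-1}$ in the power factor, and the $a$-cancellation inside $(az)^{\sum k_j}$ on the left-hand side must all conspire to give a finite limit as $a\to\infty$. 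Once this is arranged, the fact that the rational factor telescopes to $1$ (rather than to a two-term bracket, as in the odd-modulus cases) is precisely what produces the single even-modulus Jacobi triple product on the right-hand side, mirroring structurally the distinction between Bressoud's even-modulus identity and the Andrews--Gordon family.
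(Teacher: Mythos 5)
Your proposal is correct and takes essentially the same route as the paper: specialize Lemma~\ref{lem2} via $(b,c,z)\mapsto(z^{1/2},-z^{1/2},z/a)$, let $a\to\infty$, and apply \myref{jtpi} once, with the factorization $(z^{1/2}q,-z^{1/2}q;q)_k=(zq^2;q^2)_k$ and the telescoping of $(1-zq^{2k})(z;q^2)_k/\bigl((1-z)(zq^2;q^2)_k\bigr)$ to $1$ doing exactly the work you describe. (The paper's one-line proof also says ``let $c\to 0$'' and ``replace $q$ by $q^2$'', which appear to be phrases carried over from the neighbouring proofs; its operative substitution and single use of the triple product are identical to yours.)
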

\begin{proof}
  In Lemma~\ref{lem2},
  perform the substitution $(b,c,z)\mapsto(z^{\frac 12},-z^{\frac 12},z/a)$,
let $a\to\infty$ and apply \myref{jtpi}.
\end{proof}
For $z=1$ \myref{rr2multlat} reduces to the $i=r$ case of \myref{rr2mult}.
For $r=2$ \myref{rr2multlat} reduces to a special case of
Ramanujan's ${}_1\psi_1$ summation \myref{1psi1}.

Next we give a multilateral generalization of Theorem~\ref{thm:bilcc}.
(Again, for reasons of absolute convergence, we are only able to apply
Lemma~\ref{lem2}, i.e.\ the $i=r$ case of Lemma~\ref{lem}.
The $c=1$ cases of the latter could be applied to obtain multisum
identities which would be naturally bounded from below,
such as the original Andrews--Gordon identities.
However, in this work we are after \textit{multilateral} identities.)
\begin{Theorem}\label{thm:multcc}
We have for $r\ge 2$ the following multilateral summations:
\begin{subequations}\label{multcc}
\begin{align}
\label{multccb}
\sum_{\kL}&
\frac{(-q^{1-k_1}/z;q)_{k_1}\,q^{\sum_{j=1}^{r-1} k_j(k_j-1)}}
{(z^2q;q^2)_{k_{r-1}}\prod_{j=1}^{r-2}(q;q)_{k_j-k_{j+1}}}
z^{2\sum_{j=1}^{r-1} k_j}\notag\\
&=\frac{(-z;q)_\infty(q;q^2)_\infty
(z^{2r-1},z^{1-2r}q^{2r-1},q^{2r-1};q^{2r-1})_\infty}
{(z^2;q)_\infty(q^2/z^2;q^2)_\infty},
\end{align}
\begin{align}
\label{multcca}
\sum_{\kL}&
\frac{(-q^{2-2k_1}/z;q^2)_{k_1}\,q^{2\sum_{j=1}^{r-1} k_j^2}}
{(zq;q)_{2k_{r-1}}\prod_{j=1}^{r-2}(q^2;q^2)_{k_j-k_{j+1}}}
z^{2\sum_{j=1}^{r-1} k_j}\notag\\
&=\frac{(q/z;q)_\infty(-zq^2;q^2)_\infty
(-z^{2r-1}q^{2r-1},-z^{1-2r}q^{2r-1},q^{4r-2};q^{4r-2})_\infty}
{(z^2q^2,q^2/z^2,q;q^2)_\infty},
\end{align}
\begin{align}\notag
\label{multcca2}
\sum_{\kL}&
\frac{(-q^{2-2k_1}/z;q^2)_{k_1}\,q^{2\sum_{j=1}^{r-1} k_j(k_j-1)}}
{(z;q)_{2k_{r-1}}\prod_{j=1}^{r-2}(q^2;q^2)_{k_j-k_{j+1}}}
z^{2\sum_{j=1}^{r-1} k_j}=\frac{(q/z;q)_\infty(-z;q^2)_\infty}
{(z^2,q^2/z^2,q;q^2)_\infty}(q^{4r-2};q^{4r-2})_\infty\notag\\*
\times&\left[(-z^{2r-1}q,-z^{1-2r}q^{4r-3};q^{4r-2})_\infty+
z^{2r-2}(-z^{2r-1}q^{4r-3},-z^{1-2r}q;q^{4r-2})_\infty
\right],
\end{align}
\begin{align}
\notag
\label{multjac}
\sum_{\kL}&
\frac{q^{2\sum_{j=1}^{r-1} k_j^2}z^{2\sum_{j=1}^{r-1} k_j}}
{(z;q)_{1+2k_{r-1}}\prod_{j=1}^{r-2}(q^2;q^2)_{k_j-k_{j+1}}}
=\frac{(q/z;q)_\infty(q^{4r};q^{4r})_\infty}
{(z^2,q^2/z^2,q;q^2)_\infty}\notag\\*
&\times\left[(-z^{2r}q^{2r-1},-z^{-2r}q^{2r+1};q^{4r})_\infty+
z(-z^{2r}q^{2r+1},-z^{-2r}q^{2r-1};q^{4r})_\infty\right],
\end{align}
\end{subequations}
for complex $z$ such that the series on the left-hand sides
have no poles.
\end{Theorem}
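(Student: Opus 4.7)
The plan is to deduce each of the four multilateral identities in Theorem~\ref{thm:multcc} from Lemma~\ref{lem2} by mirroring the corresponding proofs of the single-series identities in Theorem~\ref{thm:bilcc}, using the same parameter specializations and one or two applications of Jacobi's triple product identity \myref{jtpi}. Since Lemma~\ref{lem2} is the $(r-1)$-dimensional generalization of \myref{bil58} (cf.\ Remark~\ref{rem}), the substitutions transfer verbatim; the only structural change is that the factor $(-bczq^2)^k$ on the right-hand side of \myref{bil58} becomes $(-a^{r-2}bcz^{r-1}q^r)^k$ in Lemma~\ref{lem2}, and it is precisely this factor that encodes the $r$-dependence of the resulting infinite products.

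Concretely, for \myref{multccb} I would specialize $(a,b,c,z)\mapsto(-z,zq^{-1/2},-zq^{-1/2},-z)$ in Lemma~\ref{lem2}, which identifies the left-hand side after the simple rewriting $(-q^{1-k_1}/z;q)_{k_1}=q^{-\binom{k_1}{2}}z^{-k_1}(-z;q)_{k_1}$; on the bilateral right-hand side the Pochhammer numerator $(az/b,az/c;q)_k$ cancels with the denominator $(bq,cq;q)_k=(z^2q;q^2)_k$, and the well-poised factor $(1-azq^{2k})/(1-az)$ simplifies, leaving a bilateral sum to which \myref{jtpi} is applied at base $q^{2r-1}$. The identity \myref{multcca} is obtained similarly after first replacing $q$ by $q^2$ in Lemma~\ref{lem2} and taking $(a,b,c,z)\mapsto(-z,z,zq^{-1},-z)$, with \myref{jtpi} applied at base $q^{4r-2}$. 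For \myref{multcca2} and \myref{multjac} I would again replace $q$ by $q^2$ and use $(a,b,c,z)\mapsto(-z,zq^{-1},zq^{-2},-zq^{-2})$ and $(b,c,z)\mapsto(z,zq^{-1},z^2/a)$ followed by $a\to\infty$, respectively; here the very-well-poised factor does not fully collapse but simplifies to $(1+zq^{2k-1})/(1+zq^{-1})$, respectively $(1+zq^{2k})/(1+z)$, after which splitting the sum according to the two pieces yields two JTPs at base $q^{4r-2}$ (respectively $q^{4r}$) that together form the bracketed expressions on the right-hand side. For \myref{multjac} the final step is to divide both sides by $(1-z)$, converting $(zq;q)_{2k_{r-1}}$ in the denominator into $(z;q)_{2k_{r-1}+1}$.

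The main obstacle will be the careful bookkeeping of exponents: after the parameter substitutions, the combined factor $q^{(4r-2)\binom k 2}(-a^{r-2}bcz^{r-1}q^{2r})^k$ in Lemma~\ref{lem2} (with $q\to q^2$), together with the large-$a$ asymptotic $(a;q^2)_k\sim(-a)^kq^{k(k-1)}$ where relevant, must be shown to produce $q$-exponents of the form $q^{4r\binom k 2}q^{(2r-1)k}$ (for \myref{multjac}, yielding $z^{2r}q^{2r\pm 1}$ inside the triple products), so that \myref{jtpi} may be applied at the intended base. A secondary task, particularly for \myref{multccb} and \myref{multcca}, is checking that the prefactor corrections arising from rewriting $(z^2;q^2)_\infty$ versus $(z^2q^2;q^2)_\infty$, and analogous shifts, exactly produce the stated target prefactor. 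Once these details are handled, the remaining manipulations reduce to elementary $q$-shifted-factorial simplifications that exactly parallel the single-series case.
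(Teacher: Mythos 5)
Your proposal is correct and follows essentially the same route as the paper: specialize Lemma~\ref{lem2} (the $i=r$ case of Lemma~\ref{lem}) exactly as in the single-series proofs of Theorem~\ref{thm:bilcc}, with the $a\to\infty$ limit and division by $(1-z)$ for \myref{multjac}, and then apply Jacobi's triple product identity once or twice at base $q^{2r-1}$, $q^{4r-2}$ or $q^{4r}$. Your substitutions agree with the paper's (for \myref{multcca} and \myref{multcca2} you use the sign-free $c\mapsto zq^{-1}$, resp.\ $c\mapsto zq^{-2}$, consistent with the single-series case, which is indeed what makes the bookkeeping come out), so no genuinely different idea is involved.
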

\begin{proof}
To prove the respective identities, apply Lemma~\ref{lem2},
perform a specific substitution of variables (as specified below),
occasionally combined with taking a limit, and then apply one or two
instances of Jacobi's triple product identity \myref{jtpi}.
For \myref{multccb}, take
$(a,b,c,z)\mapsto (-z,zq^{-\frac 12},-zq^{-\frac 12},-z)$.
For \myref{multcca}, take
$(a,b,c,z,q)\mapsto (-z,z,-zq^{-1},-z,q^2)$.
For \myref{multcca2}, take
$(a,b,c,z,q)\mapsto (-z,zq^{-1},-zq^{-2},-zq^{-2},q^2)$.
Finally, for \myref{multjac}, take
$(b,c,z,q)\mapsto (z,zq^{-1},z^2/a,q^2)$, divide both sides by $(1-z)$
and subsequently let $a\to\infty$.
\end{proof}
All identities from Theorem~\ref{thm:multcc}
reduce to multisum generalizations of corresponding unilateral identities
discussed after Theorem~\ref{thm:bilcc}.
For instance, we have
\begin{equation}
\label{multjack12}
\sum_{\kL}
\frac{q^{2\sum_{j=1}^{r-1} k_j(k_j+\delta)}}
{(q;q)_{\delta+2k_{r-1}}\prod_{j=1}^{r-2}(q^2;q^2)_{k_j-k_{j+1}}}
=\frac{(-q^{2r(1+\delta)-1},-q^{2r(1-\delta)+1},q^{4r};q^{4r})_\infty}
{(q^2;q^2)_\infty},
\end{equation}
where $\delta =0,1$, which generalizes \myref{jack1} and \myref{jack2},
respectively.
The $\delta=0$ case is obtained by multiplying both sides of \myref{multjac}
by $(1-z)$ and letting $z\to 1$, while the $\delta=1$ case is obtained from
\myref{multjac} by letting $z\to q$.
We leave other specializations of identities from Theorem~\ref{thm:multcc}
which generalize classical unilateral summations to the reader.

When we replace $q$ by $q^{2r-1}$ in \myref{multcca2} and set $z=-q^{-1}$
or $z=-q$, we obtain the following two multilateral summations
generalizing Corollary~\ref{cor:bilcs}.
\begin{Corollary}
For $r\ge 2$ we have the following multilateral summations:
\begin{subequations}
\begin{align}
\sum_{\kL}&
\frac{(q^{1-2(2r-1)k_1};q^{2(2r-1)})_{k_1}\,q^{2(2r-1)\sum_{j=1}^{r-1} k_j^2
+4(r-1)\sum_{j=1}^{r-1}k_j}}
{(-q^{2r-2};q^{2r-1})_{1+2k_{r-1}}
\prod_{j=1}^{r-2}(q^{2(2r-1)};q^{2(2r-1)})_{k_j-k_{j+1}}}\notag\\
&=\frac{(q^{4r-3},q^{2(2r-1)})_\infty
(q^{2(2r-1)},q^{4(r-1)(2r-1)},q^{2(2r-1)^2};q^{2(2r-1)^2})_\infty}
{(q;q^{2r-1})_\infty(q^{2r-1},q^{4(r-1)};q^{2(2r-1)})_\infty},\\
\sum_{\kL}&
\frac{(q^{4r-3-2(2r-1)k_1};q^{2(2r-1)})_{k_1}\,q^{2(2r-1)\sum_{j=1}^{r-1} k_j^2
-4(r-1)\sum_{j=1}^{r-1}k_j}}
{(-q;q^{2r-1})_{2k_{r-1}}
\prod_{j=1}^{r-2}(q^{2(2r-1)};q^{2(2r-1)})_{k_j-k_{j+1}}}\notag\\
&=\frac{(q,q^{2(2r-1)})_\infty
(q^{2(2r-1)},q^{4(r-1)(2r-1)},q^{2(2r-1)^2};q^{2(2r-1)^2})_\infty}
{(q^{2(r-1)};q^{2r-1})_\infty(q^2,q^{2r-1};q^{2(2r-1)})_\infty}.
\end{align}
\end{subequations}
\end{Corollary}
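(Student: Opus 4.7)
The plan is to specialize Theorem~\ref{thm:multcc}, specifically equation \myref{multcca2}, in two ways. Replace $q$ by $q^{2r-1}$ throughout, and then set $z = -q^{-1}$ to produce the first identity, or $z = -q$ to produce the second. That the two specializations yield parallel formulas reflects the symmetry $z \mapsto q/z$ that partially exchanges the two sides, mirroring what happens in the single-series case of Corollary~\ref{cor:bilcs}.

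On the left-hand side, the base change sends every $(q^2;q^2)$-Pochhammer to a $(q^{2(2r-1)};q^{2(2r-1)})$-Pochhammer and scales the quadratic exponent $2\sum k_j(k_j-1)$ to $2(2r-1)\sum k_j(k_j-1)$. The factor $z^{2\sum k_j}$ at $z = -q^{\mp 1}$ contributes $q^{\mp 2\sum k_j}$; combined with the $-2(2r-1)\sum k_j$ coming from the $(k_j-1)$ part, this yields the exponent $2(2r-1)\sum k_j^2 \pm 4(r-1)\sum k_j$ displayed in the corollary. The numerator Pochhammer $(-q^{2-2k_1}/z;q^2)_{k_1}$ specializes, by direct arithmetic of the argument, to $(q^{1-2(2r-1)k_1};q^{2(2r-1)})_{k_1}$ or $(q^{4r-3-2(2r-1)k_1};q^{2(2r-1)})_{k_1}$; the denominator Pochhammer $(z;q)_{2k_{r-1}}$ becomes $(-q^{\mp 1};q^{2r-1})_{2k_{r-1}}$, and in the first case one pulls out the $j=0$ factor $(1+q^{-1})$ and re-indexes, rewriting this as a scalar multiple of $(-q^{2r-2};q^{2r-1})_{1+2k_{r-1}}$, the absorbed scalar being swept into the overall prefactor.

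On the right-hand side, the ratio $(q/z;q)_\infty(-z;q^2)_\infty / (z^2,q^2/z^2,q;q^2)_\infty$ and the factor $(q^{4r-2};q^{4r-2})_\infty$ specialize by direct substitution. The decisive observation concerns the bracketed sum
\[
(-z^{2r-1}q,\,-z^{1-2r}q^{4r-3};q^{4r-2})_\infty + z^{2r-2}(-z^{2r-1}q^{4r-3},\,-z^{1-2r}q;q^{4r-2})_\infty .
\]
For $z = -q^{-1}$ (after the base change) one computes $-z^{2r-1}q = q^{-(2r-1)}\cdot q^{2r-1} = 1$, so the first product contains the factor $(1;q^{2(2r-1)^2})_\infty = 0$ and vanishes, leaving only the second. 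For $z = -q$ the situation is symmetric: $-z^{1-2r}q = 1$, so the second product vanishes and only the first survives. In each case the surviving product equals $(q^{2(2r-1)},\,q^{4(r-1)(2r-1)};q^{2(2r-1)^2})_\infty$, which together with the retained $(q^{2(2r-1)^2};q^{2(2r-1)^2})_\infty$ produces the triple-product factor in the numerator of the corollary. Routine manipulations of the surviving prefactors, namely splitting the $n=0$ factor off $(q^{-2};q^{2(2r-1)})_\infty$, $(q^{-1};q^{2(2r-1)})_\infty$, etc., and collecting the resulting powers of $(1-q)$, then reassemble the remainder into the displayed denominator.

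The main obstacle is the bookkeeping of $q$-shifted factorials: matching the specific forms in the corollary against those produced by the specialization, in particular rewriting $(-q^{\mp 1};q^{2r-1})_{2k_{r-1}}$ and consolidating the stray scalar factors into the closed form. No new analytic idea beyond Theorem~\ref{thm:multcc} and the trivial vanishing $(1;q)_\infty = 0$ is required, and the second identity follows from the first essentially by the $z \mapsto q/z$ symmetry of the underlying lemma.
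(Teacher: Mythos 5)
Your route is exactly the paper's: specialize \myref{multcca2} with $q\mapsto q^{2r-1}$ and $z=-q^{-1}$ (first identity) resp.\ $z=-q$ (second identity), note that one of the two triple products in the bracket vanishes because it acquires the factor $(1;q^{2(2r-1)^2})_\infty=0$, and clean up with elementary Pochhammer manipulations. For $z=-q$ your bookkeeping is correct and the substitution gives the displayed form verbatim.

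For $z=-q^{-1}$, however, there is a concrete slip: the literal substitution does \emph{not} produce the stated summand, and the rewriting you describe would fail as written. The linear part of the exponent comes out as $-4r\sum_{j}k_j$, not $+4(r-1)\sum_j k_j$; the numerator factor is $(q^{4r-1-2(2r-1)k_1};q^{2(2r-1)})_{k_1}$, one base-step off from $(q^{1-2(2r-1)k_1};q^{2(2r-1)})_{k_1}$; and $(-q^{-1};q^{2r-1})_{2k_{r-1}}=(1+q^{-1})(-q^{2r-2};q^{2r-1})_{2k_{r-1}-1}$, which differs from $(-q^{2r-2};q^{2r-1})_{1+2k_{r-1}}$ by a $k_{r-1}$-dependent (not scalar) factor. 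The missing ingredient is a simultaneous shift of all summation indices, $k_j\mapsto k_j+1$ for $j=1,\dots,r-1$, which is legitimate because the sum is multilateral and both the region $\Lambda^{r-1}$ and the differences $k_j-k_{j+1}$ are shift-invariant; after this shift each term equals the displayed summand times the constant $(1-q)q^{3-2r}/(1+q)$, independent of the $k_j$, which is then absorbed on the product side. This is the same device the paper uses elsewhere (e.g.\ the ``simultaneous shift of the summation indices'' in the proof of \myref{multrrb}), and with that one correction your argument goes through and coincides with the paper's derivation; your remaining observations (which product survives, and that it equals $(q^{2(2r-1)},q^{4(r-1)(2r-1)};q^{2(2r-1)^2})_\infty$ up to a power of $q$) are correct.
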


Finally, we have the following generalization of Corollary~\ref{cor:biljacs}:
\begin{Corollary}\label{cor:multjacs}
For $r\ge 2$ we have the following multilateral summations:
\begin{subequations}
\begin{align}
\label{multjacsa}
\sum_{\kL}
\frac{q^{4r\sum_{j=1}^{r-1} k_j^2-2(2r-1)\sum_{j=1}^{r-1}k_j}}
{(q;q^{4r})_{k_{r-1}}(-q^{2r+1};q^{4r})_{k_{r-1}}
\prod_{j=1}^{r-2}(q^{4r};q^{4r})_{k_j-k_{j+1}}}\notag&\\*
=\frac{(q^{2r},q^{4r-1};q^{4r})_\infty
(q^{4r},q^{4r(2r-1)},q^{8r^2};q^{8r^2})_\infty}
{(q^2,q^{2r-1};q^{4r})_\infty(q^{4r},q^{2(4r-1)};q^{8r})_\infty}&,\\
\notag
\label{multjacsb}
\sum_{\kL}
\frac{q^{4r\sum_{j=1}^{r-1} k_j^2+2(2r-1)\sum_{j=1}^{r-1}k_j}}
{(q^{2r-1};q^{4r})_{1+k_{r-1}}(-q^{4r-1};q^{4r})_{k_{r-1}}
\prod_{j=1}^{r-2}(q^{4r};q^{4r})_{k_j-k_{j+1}}}\notag&\\*
=\frac{(q^{2r},q^{2r+1};q^{4r})_\infty
(q^{4r},q^{4r(2r-1)},q^{8r^2};q^{8r^2})_\infty}
{(q,q^{2(2r-1)};q^{4r})_\infty(q^{4r},q^{2(2r+1)};q^{8r})_\infty}&,\\
\notag
\label{multjacsc}
\sum_{\kL}
\frac{q^{4r\sum_{j=1}^{r-1} k_j^2-2(2r-1)\sum_{j=1}^{r-1}k_j}}
{(q^{2r+1};q^{4r})_{k_{r-1}}(-q;q^{4r})_{k_{r-1}}
\prod_{j=1}^{r-2}(q^{4r};q^{4r})_{k_j-k_{j+1}}}\notag&\\*
=\frac{(q^{2r-1},q^{2r};q^{4r})_\infty
(q^{4r},q^{4r(2r-1)},q^{8r^2};q^{8r^2})_\infty}
{(q^2,q^{4r-1};q^{4r})_\infty(q^{2(2r-1)},q^{4r};q^{8r})_\infty}&,\\
\notag
\label{multjacsd}
\sum_{\kL}
\frac{q^{4r\sum_{j=1}^{r-1} k_j^2+2(2r-1)\sum_{j=1}^{r-1}k_j}}
{(q^{4r-1};q^{4r})_{k_{r-1}}(-q^{2r-1};q^{4r})_{1+k_{r-1}}
\prod_{j=1}^{r-2}(q^{4r};q^{4r})_{k_j-k_{j+1}}}\notag&\\*
=\frac{(q,q^{2r};q^{4r})_\infty
(q^{4r},q^{4r(2r-1)},q^{8r^2};q^{8r^2})_\infty}
{(q^{2r+1},q^{2(2r-1)};q^{4r})_\infty(q^2,q^{4r};q^{8r})_\infty}&.
\end{align}
\end{subequations}
\end{Corollary}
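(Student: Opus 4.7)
The plan is to derive the four multilateral summations of Corollary~\ref{cor:multjacs} from Equation~\myref{multjac} of Theorem~\ref{thm:multcc}, mirroring exactly the derivation of Corollary~\ref{cor:biljacs} from \myref{biljac} at the single-variable level: first substitute $q\mapsto -q^{2r}$, then specialize $z$ to either $q^{2r-1}$ or $q^{1-2r}$ to obtain two of the identities, and finally apply the involution $q\mapsto -q$ to deduce the remaining two.

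After replacing $q$ by $-q^{2r}$ in \myref{multjac}, the exponent $2\sum k_j^2$ becomes $4r\sum k_j^2$ (since $2\sum k_j^2$ is even), the denominators rebase to $(z;-q^{2r})_{1+2k_{r-1}}$ and $(q^{4r};q^{4r})_{k_j-k_{j+1}}$, and the bracket on the right-hand side becomes
\[
(z^{2r}q^{4r^2-2r},z^{-2r}q^{4r^2+2r};q^{8r^2})_\infty+z\,(z^{2r}q^{4r^2+2r},z^{-2r}q^{4r^2-2r};q^{8r^2})_\infty.
\]
Setting $z=q^{2r-1}$ forces $z^{-2r}q^{4r^2-2r}=1$, so the second summand vanishes by $(1;q^{8r^2})_\infty=0$; setting $z=q^{1-2r}$ forces $z^{2r}q^{4r^2-2r}=1$ and kills the first summand instead. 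Each surviving term is already a closed $(q^{8r^2};q^{8r^2})$-product, so no further Jacobi-type collapse of a two-term sum into a single product is needed.

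On the left-hand side, I would split $(z;-q^{2r})_{1+2k_{r-1}}$ by the parity of the index, giving
\[
(q^{2r-1};-q^{2r})_{1+2k_{r-1}}=(q^{2r-1};q^{4r})_{k_{r-1}+1}\,(-q^{4r-1};q^{4r})_{k_{r-1}}
\]
for $z=q^{2r-1}$, which already matches the denominator in \myref{multjacsb}, and
\[
(q^{1-2r};-q^{2r})_{1+2k_{r-1}}=(1-q^{1-2r})\,(q^{2r+1};q^{4r})_{k_{r-1}}\,(-q;q^{4r})_{k_{r-1}}
\]
for $z=q^{1-2r}$, matching \myref{multjacsc} once the scalar $(1-q^{1-2r})$ is absorbed into the right-hand side. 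Applying the same parity split to $(-q^{2r}/z;-q^{2r})_\infty$ and combining with the remaining base-$q^{4r}$ prefactors $(z^2,q^{4r}/z^2,-q^{2r};q^{4r})_\infty$, the right-hand side should collapse into the product forms in \myref{multjacsb} resp.\ \myref{multjacsc}. Finally, since the left-hand exponent $4r\sum k_j^2\pm 2(2r-1)\sum k_j$ is even and $(q^a;q^{4r})_m\mapsto(-q^a;q^{4r})_m$ under $q\mapsto -q$ for each relevant $a\in\{1,2r-1,2r+1,4r-1\}$, the substitution $q\mapsto -q$ converts \myref{multjacsc} into \myref{multjacsa} and \myref{multjacsb} into \myref{multjacsd}.

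The main (and only real) obstacle is the routine but somewhat lengthy bookkeeping of $q$-shifted factorials on the right-hand side: one has to track the two base conversions (from base $-q^{2r}$ into base $q^{4r}$ via parity, and from base $q^{4r}$ into base $q^{8r}$ via the standard factorization $(a;q^{4r})_\infty(-a;q^{4r})_\infty=(a^2;q^{8r})_\infty$), and verify that the resulting product matches the displayed product expressions in \myref{multjacsa}--\myref{multjacsd}. This is in spirit the same computation as in the $r=2$ case of Corollary~\ref{cor:biljacs}, now carried through once at the full multilateral level.
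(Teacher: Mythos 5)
Your proposal is correct and follows essentially the same route as the paper: replace $q$ by $-q^{2r}$ in \myref{multjac}, specialize $z=q^{2r-1}$ and $z=q^{1-2r}$ to get \myref{multjacsb} and \myref{multjacsc} (one bracket term vanishing via $(1;q^{8r^2})_\infty=0$), and then apply $q\mapsto -q$ to obtain \myref{multjacsa} and \myref{multjacsd}. Your parity splittings of the base-$(-q^{2r})$ factorials and the bookkeeping remarks are consistent with (indeed more detailed than) the paper's brief derivation.
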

To deduce the multilateral identities in Corollary~\ref{cor:multjacs},
first replace $q$ by $-q^{2r}$ in \myref{multjac} and then
put $z=q^{2r-1}$ to deduce \myref{multjacsb}
or $z=q^{1-2r}$ to deduce \myref{multjacsc}.
The identities in \myref{multjacsa} and \myref{multjacsd} follow by
replacing $q$ by $-q$ in \myref{multjacsc} and \myref{multjacsb}, respectively.

\section{Concluding remarks}\label{sec:conc}

In this paper, we derived a number of bilateral and multilateral identities
of the Rogers--Ramanujan type by analytic means.
The closed form bilateral summations exhibited here appear
to be the very first of their kind.
We expect that more identities of this kind can be found.
(In fact, after an earlier version of this paper appeared as a preprint,
some results were established in \cite{WYH} which are closely related to
Theorem~\ref{thm5} and the results described in Remark~\ref{remga}.)
Their very compact form and beauty suggests that these objects
merit further study.

In view of the well-established connections of the
classical Rogers--Ramanujan identities to various areas in mathematics
and in physics (including combinatorics, number theory,
orthogonal polynomials, probability theory,
statistical mechanics, representations of Lie algebras,
vertex operator algebras, knot theory and conformal field theory)
and in view of the connections to modularity which in Remark~\ref{rem:th}
were made explicit (at least for the series appearing in
Corollary~\ref{cor:bilrr}), we speculate
that similar connections to other areas can be established
for some of the bilateral identities presented in this paper.

The deeper reasons for the existence of the here presented bilateral
and multilateral identities are currently unclear. One might ask if
there is a yet to be
explained link with representation theory and wonder whether similar
results hold for Rogers--Ramanujan identities for more complicated
representations, such as for those appearing in \cite{ASW,CDU,KR,TT,W1}.
A principal candidate to obtain multilateral extensions of those identities
would be the constructive basic hypergeometric method
which was successfully applied in this paper. Unfortunately,
we have no idea which basic hypergeometric transformation
one would have to start with to successfully apply this method
in those more complicated settings.
Another natural candidate to derive further multilateral extensions
of Rogers--Ramanujan type identities would be the
functional equation method employed in in Remark~\ref{rem:ref}.
Unfortunately, the author was unable to find such identities,
that would extend those in \cite{ASW,CDU,KR,TT,W1},
by this method (or by other means, so far).
It is feasible that the product sides of the sought multilateral
identities would contain more than two terms
(whereas the product sides of the bilateral and multilateral
identities in this paper all contain at most two
terms) and would thus be rather difficult to find.

We note that we are able to give pure combinatorial interpretations of
the left- and right-hand sides of several bilateral identities
(such as those appearing in Corollaries~\ref{cor:bilrr} and
\ref{cor:bilgg}) in the spirit of the well-known partition-theoretic
interpretations of the classical Rogers--Ramanujan identities
\myref{rr} provided by MacMahon~\cite{Mac} and Schur~\cite{Sc}.
However, having such interpretations of the left-
and right-hand sides of the respective identities alone does not
automatically bring forward the bijective proofs one would like to have.
Already in the classical case the problem of finding a direct,
completely bijective, proof of \myref{rra} or of \myref{rrb}
has shown to be rather intractable.
Since the formulations of our combinatorial interpretations
of the new bilateral identities
are rather lengthy (and they do not help us to prove the identities),
we defer the details to another paper focused on those combinatorial
interpretations.

On the conceptual level the question arises whether the work in this paper
tells us anything new about the classical case.
On one hand it is interesting to observe that one
bilateral identity may contain different unilateral identities of interest.
Examples are the bilateral identities in \myref{bilcca} and
\myref{bilcca2} which each include three different unilateral identities,
as made explicit in the discussion after Theorem~\ref{thm:bilcc}.
On the other hand we would like to emphasize that the derivations
of our bilateral identities of the Rogers--Ramanujan type
do not require the combination of two unilateral sums into a bilateral sum
(such as by replacing the summation index $k$ in the second sum by $-1-k$),
which one usually requires, before applying Jacobi's triple product identity
in order to obtain the respective summations.
In this respect, our derivations are very natural and straightforward
while furnishing more general results than in the
classical unilateral cases.

\medskip
\textbf{Acknowledgments.}\ 
I would like to thank Ole Warnaar for stimulating discussions that
helped to improve the paper.
Further, I would like to thank George Andrews for making me aware of
his results in \cite{An70}. I am also indebted to Tim Huber for
sharing his insight related to Remark~\ref{rem:th}. Finally,
I would like to thank Christian Krattenthaler and Wadim Zudilin
for their interest and comments.
This research was partly supported by FWF Austrian Science Fund
grants F50-08 and P32305.


\begin{thebibliography}{WW}

\bibitem{AAB} A.\ K.\ Agarwal, G.\ E.\ Andrews, and D.\ M.\ Bressoud,
\emph{The Bailey lattice},
J.\ Indian Math.\ Soc.\ \textbf{51} (1987), 57--73.

\bibitem{An70} G.\ E.\ Andrews,
\emph{On a transformation of bilateral series with applications},
Proc.\ Amer.\ Math.\ Soc.\ \textbf{25} (1970), 554--558.

\bibitem{AnRR} G.\ E.\ Andrews,
\emph{On the general Rogers--Ramanujan theorem},
Mem.\ Amer.\ Math.\ Soc.\ \textbf{152} (1974), 86 pp.

\bibitem{An74} G.\ E.\ Andrews,
\emph{An analytic generalization of the Rogers--Ramanujan identities
for odd moduli},
Proc.\ Nat.\ Acad.\ Sci.\ USA \textbf{71} (10) (1974), 4082--4085.


\bibitem{An75} G.\ E.\ Andrews,
\emph{Problems and prospects for basic hypergeometric series},
Theory and application of special functions, Math.\ Res.\ Center,
Univ.\ Wisconsin, Publ.\ no.~35,
Academic Press, New York, 1975; pp.~191--224.

\bibitem{An} G.\ E.\ Andrews,
\emph{The Theory of Partitions},
Encyclopedia of Mathematics And Its Applications~2,
Addison--Wesley Publishing Co.,
Reading, Massachussetts, 1976.

\bibitem{An84} G.\ E.\ Andrews,
\emph{Multiple series Rogers--Ramanujan type identities},
Pacific\ J.\ Math.\ \textbf{114} (2) (1984), 267--283.

\bibitem{An86} G.\ E.\ Andrews,
\emph{$q$-Series: Their Development and Application in Analysis,
Number Theory, Combinatorics, Physics, and Computer Algebra},
CBMS Regional Conference Series in Mathematics \textbf{66},
AMS, Providence, RI, 1986.

\bibitem{AB} G.\ E.\ Andrews and B.\ C.\ Berndt,
\emph{Ramanujan's Lost Notebook, Part II},
Springer, New York, 2008.

\bibitem{ASW} G.\ E.\ Andrews, A.\ Schilling, and S.\ O.\ Warnaar,
\emph{An $A_2$ Bailey lemma and Rogers--Ramanujan type identities},
J.\ Amer.\ Math.\ Soc.\ \textbf{12}(3) (1999), 677--702.


\bibitem{AD} C.\ Armond and O.\ T.\ Dasbach,
\emph{Rogers--Ramanujan type identities and the head and
tail of the colored Jones polynomial},
preprint \texttt{arXiv:1106.3948}, 2011. 

\bibitem{Ba1} W.\ N.\ Bailey,
\emph{Some identities in combinatory analysis},
Proc.\ London Math.\ Soc.\ (2) \textbf{49} (1947), 421--435.

\bibitem{Ba2} W.\ N.\ Bailey,
\emph{Identities of the Rogers--Ramanujan type},
Proc.\ London Math.\ Soc.\ (2) \textbf{50} (1948), 1--10.

\bibitem{Ba} W.\ N.\ Bailey,
\emph{On the basic bilateral hypergeometric series $_2\psi_2$},
Quart.\ J.\ Math.\ Oxford (2) \textbf{1} (1950), 194--198.

\bibitem{B} R.\ J.\ Baxter,
\emph{Rogers--Ramanujan identities in the hard hexagon model},
J.\ Stat.\ Phys.\ \textbf{26} (3) (1981), 427--452.  

\bibitem{BM} A.\ Berkovich and B.\ M.\ McCoy,
\emph{Rogers--Ramanujan identities: a century of progress from
mathematics to physics},
Doc.\ Math., Extra Vol.\ ICM III (1998), 163--172.

\bibitem{Bh} G.\ Bhatnagar,
\emph{How to discover the Rogers--Ramanujan identities},
Resonance \textbf{20} (5) (2015), 416--430.

\bibitem{BP} C.\ Boulet and I.\ Pak,
 \emph{A combinatorial proof of the Rogers--Ramanujan and
 Schur identities}, J.\ Combin.\ Theory Ser.\ A
 \textbf{113} (6) (2006), 1019--1030. 
 
\bibitem{Br} D.\ M.\ Bressoud,
\emph{Analytic and combinatorial generalizations of the
Rogers--Ramanujan identities},
Mem.\ Amer.\ Math.\ Soc.\ \textbf{227} (1980), 54~pp.

\bibitem{Br2} D.\ M.\ Bressoud,
\emph{Lattice paths and the Rogers--Ramanujan identities},
in: K.\ Alladi (ed.) \emph{Number Theory}, Madras, India, 1987.
Lec.\ Notes in Math.\ \textbf{1395}, Springer, Berlin,
Heidelberg, 1989; pp.~140--172.

\bibitem{BZ} D.\ M.\ Bressoud and D.\ Zeilberger,
\emph{A short Rogers--Ramanujan bijection},
Discrete Math.\ \textbf{38} (2--3) (1982), 313--315

\bibitem{Cau} A.\ L.\ Cauchy, 
\emph{M\'emoire sur les fonctions dont plusieurs valeurs\dots},
C.\ R.\ Acad.\ Sci.\ Paris {\bf 17} (1843), 523;
reprinted in {\em Oeuvres de Cauchy}, Ser.~1 (8),
Gauthier-Villars, Paris (1893), pp.~42--50.

\bibitem{CZ} W.\ Chu and W.\ Zhang,
\emph{Bilateral Bailey lemma and Rogers--Ramanujan identities},
Adv.\ Appl.\ Math.\ \textbf{42} (3) (2009), 358--391.

\bibitem{Co} S.\ Corteel,
\emph{Rogers--Ramanujan identities and the
Robinson--Shensted--Knuth correspondence},
Proc.\ Amer.\ Math.\ Soc.\ \textbf{145} (5) (2017), 2011--2022.

\bibitem{CDU} S.\ Corteel, J.\ Dehousse, and A.\ K.\ Uncu,
\emph{Cylindric partitions and some new $A_2$ Rogers--Ramanujan identities},
Proc.\ Amer.\ Math.\ Soc.\ \textbf{150}(2) (2022), 481--497.

\bibitem{EI} A.\ El-Guindy and M.\ E.\ H.\ Ismail,
\emph{On certain generalizations of Rogers--Ramanujan
type identities}, preprint \texttt{arXiv:1602.00316}, 2016.

\bibitem{Fu} J.\ Fulman,
\emph{A probabilistic proof of the Rogers--Ramanujan identities},
Bull.\ London Math.\ Soc.\ \textbf{33} (4) (2001), 397--407. 

\bibitem{GIS} K.\ Garrett, M.\ E.\ H.\ Ismail, and D.\ Stanton,
\emph{Variants of the Rogers--Ramanujan identities},
Adv.\ Appl.\ Math.\ \textbf{23} (3) (1999), 274--299.

\bibitem{GM} A.\ M.\ Garsia and S.\ C.\ Milne,
\emph{A Rogers--Ramanujan bijection},
J.\ Combin.\ Theory Ser.\ A \textbf{31} (3) (1981), 289--339.

\bibitem{GR} G.\ Gasper and M.\ Rahman,
\emph{Basic Hypergeometric Series}, Second Edition,
Encyclopedia of Mathematics and its Applications~96,
Cambridge University Press, Cambridge, 2004.

\bibitem{Goe0} H.\ G\"ollnitz,
\emph{Einfache Partitionen},
Diplomarbeit, Univ.\ G\"ottingen, 1960. 

\bibitem{Goe} H.\ G\"ollnitz,
\emph{Partitionen mit Differenzenbedingungen},
J.\ reine angew.\ Math.\ \textbf{225} (1967), 154--190. 

\bibitem{Go0} B.\ Gordon,
\emph{A combinatorial generalization of the
Rogers--Ramanujan identities},
Amer.\ J.\ Math. \textbf{83} (1961), 393--399. 

\bibitem{Go} B.\ Gordon,
\emph{Some continued fractions of the
Rogers--Ramanujan type},
Duke Math.\ J.\ \textbf{32} (1965), 741--748. 

\bibitem{GOW} M.\ J.\ Griffin, K.\ Ono, and S.\ O.\ Warnaar,
\emph{A framework of Rogers--Ramanujan identities and
their arithmetic properties},
Duke Math.\ J.\ \textbf{165} (2016), 1475--1527.

\bibitem{Ha} G.\ H.\ Hardy,
\emph{The Indian mathematician Ramanujan},
Amer.\ Math.\ Monthly \textbf{44} (1937), 135--155.

\bibitem{Ha2} G.\ H.\ Hardy,
\emph{Ramanujan}, Cambridge University Press,
Cambridge, 1940;
reprinted by AMS Chelsea Publishing, New York, 1970.

\bibitem{HSW} G.\ H.\ Hardy, P.\ V.\ Seshu Aiyar, and B.\ M.\ Wilson,
\emph{Collected Papers of Srinivasa Ramanujan},
Cambridge University Press, Cambridge, 1927;
reprinted by AMS Chelsea Publishing, New York, 1962.

\bibitem{H} T.\ Huber,
\emph{A theory of theta functions to the quintic base},
J.\ Number Th.\ \textbf{134} (2014), 49--92.

\bibitem{IS} M.\ E.\ H.\ Ismail and D.\ Stanton,
\emph{Tribasic integrals and identities of
Rogers--Ramanujan type},
Trans.\ Amer.\ Math.\ Soc.\ \textbf{355} (10) (2003), 4061--4091.

\bibitem{IZ} M.\ E.\ H.\ Ismail and R.\ Zhang,
\emph{$q$-Bessel functions and Rogers--Ramanujan type identities},
Proc.\ Amer.\ Math.\ Soc.\ \textbf{146}(9) (2018), 3633--3646.

\bibitem{KR} S.\ Kanade and M.\ Russell,
\emph{Completing the $A_2$ Andrews--Schilling--Warnaar identities},
preprint \texttt{arXiv:2203.05690}, 2022.  

\bibitem{Ja} F.\ H.\ Jackson,
\emph{Examples of a generalization of Euler's transformation
for power series},
Mess.\ Math.\ \textbf{57} (1928), 169--187.

\bibitem{LM} J.\ Lepowsky and S.\ C.\ Milne,
\emph{Lie algebraic approaches to classical partition identities},
Adv.\ Math.\ \textbf{29} (1) (1978), 15--59.

\bibitem{LW} J.\ Lepowsky and R.\ L.\ Wilson,
\emph{The Rogers--Ramanujan identities: Lie theoretic
interpretation and proof},
Proc.\ Nat.\ Acad.\ Sci.\ USA \textbf{78} (1981), no. 2, part 1, 699--701.

\bibitem{Mac} P.\ A.\ MacMahon,
\emph{Combinatory Analysis, vol.~II},
Cambridge University Press, 1918.

\bibitem{McLSiZi} J.\ McLaughlin, A.\ V.\ Sills, and P.\ Zimmer,
\emph{Rogers--Ramanujan--Slater type identities},
Electron.\ J.\ Combin.\ \textbf{15} (2008), \#DS15.

\bibitem{Pr} M.\ Primc,
\emph{Vertex algebras and combinatorial identities},
Acta Appl.\ Math.\ \textbf{73} (1-2) (2002), 221--238.

\bibitem{Ro2} L.\ J.\ Rogers,
\emph{Second memoir on the expansion of certain infinite products},
Proc.\ London Math.\ Soc.\ \textbf{25} (1894), 318--343.


\bibitem{Sc} I.\ Schur,
\emph{Ein Beitrag zur additiven Zahlentheorie und zur
Theorie der Kettenbr\"uche},
Sitzungsberichte der Berliner Akademie (1917), 302--321.

\bibitem{Si} A.\ V.\ Sills,
\emph{An Invitation to the Rogers--Ramanujan Identities},
CRC Press, Taylor \& Francis Group, Boca Raton, FL, 2018. 

\bibitem{Sl} L.\ J.\ Slater,
\emph{Further identities of the Rogers--Ramanujan type},
Proc.\ London Math.\ Soc., Ser.~2 \textbf{54} (1952), 147--167. 

\bibitem{TT} M.\ Takigiku and S.\ Tsuchioka,
\emph{Andrews--Gordon type series for the level $5$ and $7$
standard modules of the affine Lie algebra $A_2^{(2)}$},
Proc.\ Amer.\ Math.\ Soc.\ \textbf{149}(7) (2021), 2763--2776.  

\bibitem{W} S.\ O.\ Warnaar,
\emph{The Andrews--Gordon identities and
$q$-multinomial coefficients},
Commun.\ Math.\ Phys.\ \textbf{184} (1997), 203--232.

\bibitem{W1} S.\ O.\ Warnaar,
\emph{The $A_2$ Andrews--Gordon identities and cylindric partitions},
to appear in Trans.\ Amer.\ Math.\ Soc., Ser.\ B.;
preprint \texttt{arXiv:2111.07550}, 2021.

\bibitem{Wa} G.\ N.\ Watson,
\emph{A new proof of the Rogers--Ramanujan identities},
J.\ London Math.\ Soc.\ \textbf{4} (1929), 4--9.

\bibitem{WYH} C.\ Wei, Y.\ Yu, and Q.\ Hu,
\emph{Two generalizations of Jacobi's triple product identity
  and their applications}, Ramanujan J.\ (2022),
\texttt{https://doi.org/10.1007/s11139-022-00640-x}.
  
\end{thebibliography}
\end{document}